\newtheorem{theorem}{Theorem}[subsection]
\newtheorem{proposition}[theorem]{Proposition}
\newtheorem{lemma}[theorem]{Lemma}
\newtheorem{example}[theorem]{Example}
\newtheorem{corollary}[theorem]{Corollary}
\newtheorem{remark}[theorem]{Remark}
\newtheorem{definition}[theorem]{Definition}
\newcommand{\sol}{{\text{\rm sol}}}
\renewcommand{\subsection}{\@startsection{subsection}{1}
{0pt}{3.25ex plus 1ex minus.2ex}{-1em}{\normalfont\normalsize\bf}}\makeatother
\begin{document}

\title{{\bf On aL(M)wc and oL(M)wc operators between Banach lattices}}
\maketitle
\author{\centering{{Safak Alpay$^{1}$, Svetlana Gorokhova $^{2}$\\ 
\small $1$ Middle East Technical University, Ankara, Turkey\\ 
\small $2$ Uznyj matematiceskij institut VNC RAN, Vladikavkaz, Russia}
\abstract{We study almost L(M) weakly compact and order L(M) weakly compact operators in Banach spaces. 
Several further topics related to these operators are investigated.}

\vspace{5mm}
{\bf Keywords:} Banach lattice, L-weakly compact operator, 
limited operator, Dunford-Pettis operator.

\vspace{5mm}
{\bf MSC2020:} {\normalsize 46B25, 46B42, 46B50, 47B60}
}}

\bigskip
\bigskip
\bigskip

In the present paper, we continue to study compactness in Banach spaces with main 
emphasis on almost and order L(M)-weakly compact operators (see, also 
the recent papers \cite{AG_la},  \cite{OM22}, 
\cite{EG23aff}, \cite{AEG_duality}, and \cite{AEG_env}).

\section{Introduction and Preliminaries}

Throughout the paper, all vector spaces are real and operators are linear and bounded; $X$ and $Y$ 
denote Banach spaces; $E$, $F$, and $G$ denote Banach lattices. A subset $A$ of $X$ is called
{\em bounded} if $A$ is norm bounded. We denote by $B_X$ the closed unit ball of $X$,
by $\text{\rm L}(X,Y)$ ($\text{\rm W}(X,Y)$, $\text{\rm K}(X,Y)$) 
the space of all bounded (resp., weakly compact, compact) operators 
from $X$ to $Y$, by $I_X$ the identity operator on $X$. 
We also denote by $E_+$ the positive cone of $E$, by $\text{sol}(A) = \bigcup_{a\in A}[-|a|,|a|]$ the solid hull 
of $A\subseteq E$, by $$E^a=\{x\in E: |x|\ge x_n\downarrow 0\Rightarrow\|x_n\|\to 0\}$$ 
the $\text{\rm o}$-continuous part of $E$, and by $\text{\rm L}_r(E,F)$ (resp., $\text{\rm L}_{ob}(E,F)$) the space of regular
(resp., order bounded) operators from $E$ to $F$.

\subsection{}
We shall use the following definition.

\begin{definition}\label{limited subset}
{\em
A bounded subset $A$
\begin{enumerate}[a)]
\item 
of $X$ is called {\em limited}, 
if each \text{\rm w}$^\ast$-null  sequence in $X^\ast$ is uniformly null on $A$.
\item
of $X$ is called {\em Dunford-Pettis} or a \text{\rm DP}-{\em set}
if each weakly null  sequence in $X^\ast$ is uniformly null on $A$.
\item 
of $E$ is called {\em almost limited} if each disjoint 
\text{\rm w}$^\ast$-null sequence in $E^\ast$ is uniformly null on $A$.
\item 
of $E$ is called an \text{\rm Lwc} {\em set} if each disjoint 
sequence in $\sol(A)$ is norm-null.
\item 
of an ordered Banach space $X$ is called {\em almost order bounded} if, 
for each $\varepsilon>0$ there exist $a,b\in X$ satisfying 
$A\subseteq[a,b]+\varepsilon B_X$.
\end{enumerate}}
\end{definition}

\noindent
For every \text{\rm Lwc} subset $A$ of $E$, 
we have $A\subseteq E^a$. Indeed, otherwise, for some $a\in A$ with $|a|\in E\setminus E^a$ 
there exists a disjoint sequence 
$(x_n)$ in $[0,|a|]\subseteq\text{\rm sol}(A)$
with $\|x_n\|\not\to 0$.

\subsection{}
The next definition will be used often.

\begin{definition}\label{Main Schur property}
{\em A Banach space $X$ has:
\begin{enumerate}[a)]
\item 
the {\em Schur property} (or $X\in\text{\rm (SP)}$)
if each weakly null sequence in $X$ is norm null; 
\item
the  w$^\ast$-{\em Dunford--Pettis property} (or $X\in\text{\rm (w$^\ast$DPP)}$) if 
$f_n(x_n)\to 0$ for each weakly null $(x_n)$ in $X$ and each weak$^\ast$ null $(f_n)$ in $X^\ast$.
\end{enumerate}}
\end{definition}

\begin{definition}{\em
A Banach lattice $F$ has the {\em property} (d) (shortly $F\in\text{\rm (d)}$) if, 
for every disjoint $\text{\rm w}^\ast$-null $(f_n)$ in $F^\ast$,
$|f_n|\stackrel{\text{\rm w}^\ast}{\to}0$ \cite[Def.\,1]{El}.
}
\end{definition}

\noindent
Every $\sigma$-Dedekind complete Riesz space has (d) \cite{El}. 
The Banach lattice $c$ does not have (d) \cite[Example~2.1\,(2)]{CCJ}.

\subsection{}
We use also the following definitions concerning operators.

\begin{definition}\label{X to Y}
{\em An operator $T: X\to Y$ is called 
{\em limited} ($T\in\text{\rm Lim}(X,Y)$) if $T(B_X)$ is a limited subset of $Y$. }
\end{definition}

\noindent
The canonical embedding of $c_0$ into $\ell^\infty$ is limited, but not compact.

\begin{definition}\label{X to F}
{\em An operator $T:X\to F$ is called
\begin{enumerate}[a)]
\item
{\em almost limited} ($T\in\text{\rm aLim}(X,F)$) if $T(B_X)$ is almost limited. 
\item 
{\em semi-compact} ($T\in\text{\rm semi-K}(X,F)$) if it carries bounded subsets of $X$ onto almost order bounded subsets of $F$.
\item  
$\text{\rm L}$-{\em weakly compact} ($T\in\text{\rm Lwc}(X,F)$) 
if $T$ carries bounded subsets of $X$ onto \text{\rm Lwc} subsets of $F$;
\item
{\em almost} $\text{\rm L}$-{\em weakly compact} 
($T\in\text{\rm aLwc}(X,F)$) 
if $T$ carries relatively weakly compact subsets of $X$ onto 
$\text{\rm Lwc}$ subsets of $F$ \cite[Def.\,2.1]{BLM18}.
\item
{\em limitedly {\rm L}-weakly compact} ($T\in l\text{\rm -Lwc}(X,F)$)
if $T$ maps limited subsets of $X$ onto  \text{\rm Lwc} subsets of $F$ (see \cite[Def.\,2.8\,(b)]{AEG_duality}).
\end{enumerate}}
\end{definition}

\noindent
It is well known that $\text{\rm semi-K}(X,F)=\text{\rm Lwc}(X,F)$ for every $X$,
whenever the norm in $F$ is \text{\rm o}-continuous.
It is easily seen that the identity operator $I_E$
is an $l\text{\rm -Mwc}$ operator iff $E^\ast$ is a KB-space.
An \text{\rm Lwc} {\em operator} $T:X\to F$ is defined in \cite[Def.\,1.\,iii)]{Mey74}
via the condition that $T(B_X)$ is an \text{\rm Lwc} set. Then 
$\text{\rm Lwc}(X,F)\subseteq\text{\rm W}(X,F)$, and
\begin{equation}\label{1a}
   T\in\text{\rm Lwc}(X,F)\Longleftrightarrow\text{\rm $T$ 
takes bounded subsets onto \text{\rm Lwc} subsets.}
\end{equation}

\begin{definition}\label{E to Y}
{\em An operator $T:E\to Y$ is called
\begin{enumerate}[a)]
\item
M-{\em weakly compact} ($T\in\text{\rm Mwc}(E,Y)$) if $\|Tx_n\|\to 0$ 
for every disjoint bounded $(x_n)$ in $E$ \cite[Def.\,1(iv)]{Mey74}$;$
\item
{\em limitedly {\rm M}-weakly compact} ($T\in l\text{\rm -Mwc}(E,Y)$), 
if $Tx_n\stackrel{\text{\rm w}}{\to}0$ for every disjoint bounded $(x_n)$ in $E$. 
\item 
{\em almost \text{\rm M}-weakly compact} 
($T\in\text{\rm aMwc}(E,Y)$)  
if $f_n(Tx_n)\to 0$ for every weakly convergent 
$(f_n)$ in $Y^\ast$ and every disjoint 
bounded $(x_n)$ in $E$ \cite[Def.\,2.2]{BLM18}.
\end{enumerate}}
\end{definition}

\noindent
The classes of $l$-L(M)wc operators were defined in   \cite{AEG_duality} and in \cite{OM22} 
(in the second paper they were named  weak \text{\rm L(M)wc} operators).
Every Lwc (resp., Mwc) operator is aLwc (resp., aMwc) and $l$-Lwc (resp., $l$-Mwc).  However, the reverse inclusions are not true. 

\begin{example} 
$I_{\ell^1}\in\text{\rm aLwc}(\ell^1)\cap l\text{\rm -Lwc}(\ell^1)\setminus\text{\rm Lwc}(\ell^1)$;\\
$I_{c_0}\in\text{\rm aMwc}(c_0)\setminus\text{\rm Mwc}(c_0)$; \  \ 
$I_{\ell^\infty}\in l\text{\rm -Mwc}(\ell^\infty)\setminus\text{\rm Mwc}(\ell^\infty)$.
\end{example}

\begin{definition}\label{Main o-W operators}
{\em An operator $T:E\to F$ is called
\begin{enumerate}[a)]
\item
{\em order} \text{\rm L}-{\em weakly compact} 
($T\in\text{\rm oLwc}(E,F)$) 
if $T$ carries order bounded subsets of $E$ onto 
$\text{\rm Lwc}$ subsets of $F$ \cite[Def.2.1]{BLM21}$;$
\item 
{\em order} \text{\rm M}-{\em weakly compact} 
($T\in\text{\rm oMwc}(E,F)$) 
if $f_n(Tx_n)\to 0$ for every order bounded $(f_n)$ in $F^\ast$ and every 
disjoint bounded $(x_n)$ in $E$~\cite[Def.2.2]{BLM21}.
\end{enumerate}}
\end{definition}

\begin{example}{\em
$I_{c_0}\in\text{\rm oLwc}(c_0)\setminus\text{\rm Lwc}(c_0)$; 
$I_{\ell^\infty}\in \text{\rm oMwc}(\ell^\infty)\setminus\text{\rm Mwc}(\ell^\infty)$.
}
\end{example}

\begin{definition}\label{DPOcoll}
{\em An operator $T: X\to Y$ is called 
\begin{enumerate}[]
\item[a)]
{\em Dunford--Pettis} (shortly, $T\in\text{\rm DP}(X,Y)$) 
if $T$ takes weakly null sequences to norm null ones;
\item[b)]
{\em weak Dunford--Pettis} (shortly, $T\in\text{\rm wDP}(X,Y)$)
if, for every w-null $(x_n)$ in $X$ and
w-null $(f_n)$ in $Y^\ast$ we have $f_n(Tx_n)\to 0$.
\end{enumerate}
An operator $T:E\to Y$ is called
\begin{enumerate}[]
\item[c)]
{\em almost Dunford--Pettis} ($T\in\text{\rm aDP}(E,Y)$) 
if $T$ takes disjoint w-null sequences to norm null ones \cite{Wnuk94}.
\item[d)]
{\em almost weak Dun\-ford--Pet\-tis} ($T\in\text{\rm awDP}(E,Y)$) 
if $f_n(Tx_n) \to 0$ whenever $(f_n)$ is w-null in $Y^\ast $ and 
$(x_n)$ is disjoint \text{\rm w}-null in $E$. 
\end{enumerate}}
\end{definition}


\begin{example}{\em
The following holds.
\begin{enumerate}[(1)]
\item
$I_{\ell^1} \in\text{\rm DP}(\ell^1)\setminus \text{\rm Lim}(\ell^1)$.
\item
$J\in\text{\rm Lim}(c_0,\ell^\infty)\setminus\text{\rm DP}(c_0,\ell^\infty)$, where
$J:c_0\to \ell^\infty$ is the natural embedding. 
\item
$I_{c_0} \in\text{\rm wDP}(c_0)$ as $c_0\in\text{\rm (DPP)}$;
$I_{c_0} \not\in\text{\rm DP}(c_0)\cup\text{\rm aDP}(c_0)$. $I_{c_0} \in\text{\rm awDP}(c_0)$.
However, if $Y$ is reflexive, then $\text{\rm DP}(E,Y)=\text{\rm wDP}(E,Y)$.
\item
$I_{L^1[0,1]} \in\text{\rm aDP}(L^1[0,1])\setminus \text{\rm DP}(L^1[0,1])$.
\item
$I_{L_p[0,1]} \in\text{\rm aw$^\ast$DP}(L_p[0,1])$ for $1\leq p <\infty$,
\end{enumerate}
}
\end{example}

\noindent
For further unexplained terminology and notation, we refer to \cite{AlBu,BD,AAT,AEG22,BLM21,BLM18,EAS,Mey91}.

\medskip
\noindent
We start by investigating the algebraic properties of $\text{\rm aLwc}/\text{\rm aMwc}$ 
operators and show when they are closed ideals of $\text{\rm L}(E)$. 
Then we show $\text{\rm aL(M)wc}$ operators may coincide with bounded ones. 
We continue  with conditions for $\text{\rm aL(M)wc}$ operators to coincide 
with $\text{\rm L(M)wc}$ operators. Then we compare $\text{\rm aL(M)wc}$ operators 
with almost limited  and various Dunford -- Pettise type operators. 
In Section 3, we study similar questions for order $\text{\rm L(M)wc}$ 
and limited $\text{\rm L(M)wc}$ operators, and show each $T\in\text{\rm L}_r(E,F)$
is locally $\text{\rm oMwc}$. 
In Section 4, we discuss various class of operators that are considered. 

\section{\text{\rm aL(M)wc} operators}
The classes of almost  L-weakly compact (\text{\rm aLwc}) and  almost  
M-weakly compact  (\text{\rm aMwc}) operators were introduced in \cite{BLM18}. 
They generalize L- and M-weakly compact operators. 
Note that {\text{\rm aL(M)wc} operators were also studied in \cite{EAS,NoLE22,BLM21}.

\subsection{} 
We recall several facts on \text{\rm aLwc} operators.
As weakly compact sets are bounded, 
each \text{\rm Lwc} operator is an \text{\rm aLwc} operator.
\noindent
By \eqref{1a}, if $S\in\text{\rm W}(Y,X)$ and $T\in\text{\rm aLwc}(X,F)$ then
$T  S\in\text{\rm Lwc}(Y,F)$.
Furthermore, each \text{\rm aLwc} operator $T:X\to F$ is bounded. Indeed, otherwise
there exists a norm null sequence $(x_n)$ in $B_X$ satisfying $\|Tx_n\|\ge n$
for all $n\in\mathbb{N}$, violating the fact that the \text{\rm Lwc} set 
$\{Tx_n: n\in\mathbb{N}\}$ is bounded. Since $\text{\rm Lwc}(X,F)\subseteq\text{\rm W}(X,F)$,
the identity operator $I_{\ell^1}$ is not \text{\rm Lwc}. 

\subsection{Algebraic properties of \text{\rm aL(M)wc} operators.}

\begin{lemma}\label{Composition of Mwc}
The following holds. 
\begin{enumerate}[\em (i)]
\item if $T\in\text{\rm aMwc}(E,F)$ and $S\in\text{\rm Mwc}(F,Y)$ then $S  T\in\text{\rm Mwc}(E,Y)$$;$
\item if $T\in\text{\rm aMwc}(E,X)$ and $S\in\text{\rm L}(X,Y)$ then $S  T\in\text{\rm aMwc}(E,Y)$$;$
\item if $T\in \text{\rm aLwc}(F,G)$, $S\in \text{\rm semi-K}(E,F)$, and $F=F^a$ then \\ $TS\in \text{\rm Lwc}(E,G)$.
\end{enumerate}
\end{lemma}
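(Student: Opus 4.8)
\noindent
I would treat the three parts in increasing order of difficulty, (i) being the only one that needs an idea beyond unwinding definitions. For (iii), note first that the hypothesis $F=F^a$ says exactly that the norm of $F$ is \text{\rm o}-continuous, so the identity $\text{\rm semi-K}(E,F)=\text{\rm Lwc}(E,F)$ recalled above applies and gives $S\in\text{\rm Lwc}(E,F)$. Since $\text{\rm Lwc}(E,F)\subseteq\text{\rm W}(E,F)$, the operator $S$ carries each bounded subset of $E$ onto a relatively weakly compact subset of $F$; as $T\in\text{\rm aLwc}(F,G)$ carries relatively weakly compact subsets of $F$ onto \text{\rm Lwc} subsets of $G$, the composition $TS$ carries bounded subsets of $E$ onto \text{\rm Lwc} subsets of $G$, and by \eqref{1a} this is exactly $TS\in\text{\rm Lwc}(E,G)$.

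For (ii), I would argue directly from the definition of an aMwc operator. Fix a weakly convergent sequence $(g_n)$ in $Y^\ast$ and a disjoint bounded sequence $(x_n)$ in $E$. The adjoint $S^\ast:Y^\ast\to X^\ast$, being bounded, is weak-to-weak continuous, so $(S^\ast g_n)$ is weakly convergent in $X^\ast$; feeding $(S^\ast g_n)$ and $(x_n)$ into $T\in\text{\rm aMwc}(E,X)$ yields $\langle S^\ast g_n,Tx_n\rangle\to 0$, i.e. $g_n(STx_n)\to 0$. Hence $ST\in\text{\rm aMwc}(E,Y)$.

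For (i), I would proceed by contradiction. Suppose $(x_n)$ is disjoint and bounded in $E$ but $\|STx_n\|\not\to 0$; passing to a subsequence (which remains disjoint and bounded) we may assume $\|STx_n\|\ge\varepsilon>0$ for all $n$, and by Hahn--Banach choose $g_n\in B_{Y^\ast}$ with $g_n(STx_n)=\|STx_n\|$. The crucial input is that $S^\ast(B_{Y^\ast})$ is relatively weakly compact in $F^\ast$: by the classical Meyer--Nieberg adjoint duality $S\in\text{\rm Mwc}(F,Y)$ iff $S^\ast\in\text{\rm Lwc}(Y^\ast,F^\ast)$ \cite{Mey74,AlBu}, and since $\text{\rm Lwc}(Y^\ast,F^\ast)\subseteq\text{\rm W}(Y^\ast,F^\ast)$ the operator $S^\ast$ is weakly compact. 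By the Eberlein--\v{S}mulian theorem we may pass to a further subsequence $(n_k)$ so that $(S^\ast g_{n_k})$ converges weakly in $F^\ast$; applying $T\in\text{\rm aMwc}(E,F)$ to the weakly convergent sequence $(S^\ast g_{n_k})_k$ and the disjoint bounded sequence $(x_{n_k})_k$ gives $\langle S^\ast g_{n_k},Tx_{n_k}\rangle\to 0$. But $\langle S^\ast g_{n_k},Tx_{n_k}\rangle=g_{n_k}(STx_{n_k})=\|STx_{n_k}\|\ge\varepsilon$, a contradiction. Hence $\|STx_n\|\to 0$ for every disjoint bounded $(x_n)$ in $E$, i.e. $ST\in\text{\rm Mwc}(E,Y)$.

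The step I expect to be the main obstacle is the relative weak compactness of $S^\ast(B_{Y^\ast})$ used in (i): this is the only place where one must import something classical, namely the M--L adjoint duality (equivalently, that an M-weakly compact operator is weakly compact). It may be worth recording beforehand the elementary observation that every aMwc operator sends disjoint bounded sequences to weakly null ones — test the definition against constant sequences of functionals — since this is the conceptual reason the contradiction scheme in (i) goes through; parts (ii) and (iii) then need nothing more than weak-to-weak continuity of adjoints and the two quoted facts ($\text{\rm semi-K}=\text{\rm Lwc}$ under \text{\rm o}-continuity, and $\text{\rm Lwc}\subseteq\text{\rm W}$).
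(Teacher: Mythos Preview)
Your argument is correct in all three parts; part (ii) is exactly what the paper does. Parts (i) and (iii) reach the same conclusions by somewhat different routes.

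For (i), the paper dualizes throughout: from $T^\ast\in\text{\rm aLwc}(F^\ast,E^\ast)$ and $S^\ast\in\text{\rm Lwc}(Y^\ast,F^\ast)\subseteq\text{\rm W}(Y^\ast,F^\ast)$ it invokes the composition result \cite[Thm.\,2.4]{BLM18} (an \text{\rm aLwc} operator composed with a weakly compact one is \text{\rm Lwc}) to obtain $(ST)^\ast\in\text{\rm Lwc}(Y^\ast,E^\ast)$, and then dualizes back. Your primal argument via Eberlein--\v{S}mulian uses the same key input---weak compactness of $S^\ast$---but avoids citing both the aMwc/aLwc duality and the aLwc composition theorem; it is more self-contained, while the paper's version is shorter once those references are granted.

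For (iii), the paper works directly with the approximation $S(B_E)\subseteq[-x,x]+\varepsilon B_F$ and pushes it through $T$. Your route---using $\text{\rm semi-K}(E,F)=\text{\rm Lwc}(E,F)$ under o-continuity to get $S\in\text{\rm W}(E,F)$ and then applying the observation (already recorded in the paper just before the lemma) that $T\in\text{\rm aLwc}$ composed with a weakly compact operator is \text{\rm Lwc}---is cleaner. It also sidesteps a slip in the paper's wording: the set $[-x,x]+\varepsilon B_F$ is \emph{not} relatively weakly compact in general (take $F=c_0$), although the intended argument is easily repaired by applying $T$ to $[-x,x]$ alone and using that \text{\rm Lwc} sets are stable under $\varepsilon$-enlargement.
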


\begin{proof}
(i) Let $T\in\text{\rm aMwc}(E,F)$ and $S\in\text{\rm Mwc}(F,Y)$. 
Then $T^\ast \in\text{\rm aLwc}(F^\ast ,E^\ast )$ by \cite[Thm.2.5]{BLM18},
and $S^\ast \in\text{\rm Lwc}(Y^\ast ,F^\ast )$ by \cite[Prop.\,3.6.11\,(i)]{Mey91},
and hence $S^\ast$ is weakly compact. 
Then $(S  T)^\ast =T^\ast   S^\ast \in\text{\rm Lwc}(Y^\ast ,E^\ast )$
by \cite[Thm.2.4]{BLM18}. 
Thus $S  T\in\text{\rm Mwc}(E,Y)$  by \cite[Prop.\,3.6.11\,(i)]{Mey91}.

\medskip
(ii) Follows directly from definition of \text{\rm aMwc} operators. 

\medskip
(iii)
Since $S\in \text{\rm semi-K}(E,F)$, for each $\varepsilon >0$ 
there is $x\in E_+$ such that $S(B_E)\subseteq [-x,x]+\varepsilon B_F$.
As $F=F^a$, the set  $[-x,x]+\varepsilon B_F$ is relatively weakly compact in $F$. 
Since $T\in\text{\rm aLwc}(F,G)$, the set $T([-x,x]+\varepsilon B_F)=TS(B_E)$ 
is $\text{\rm Lwc}$. Hence $TS\in \text{\rm Lwc}(E,G)$.
\end{proof}

\begin{proposition}\label{Lwc-algebra}
For any Banach lattice $E$ the following hold.
\begin{enumerate}[{\rm (i)}]
\item $\text{\rm Lwc}(E)$ is a closed subalgebra of $\text{\rm W}(E)$ and is unital 
iff the identity operator $I_E$ is \text{\rm Lwc}.
\item $\text{\rm aLwc}(E)$ is a closed right ideal in $\text{\rm L}(E)$ 
$($and hence a subalgebra of $\text{\rm L}(E)$$)$, and is unital iff $I_E$ is {\em aLwc}.
\end{enumerate}
\end{proposition}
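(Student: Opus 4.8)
The plan is to check, for each of $\text{\rm Lwc}(E)$ and $\text{\rm aLwc}(E)$, that it is a linear subspace of $\text{\rm L}(E)$, that it is closed under the relevant compositions, and that it is norm-closed, and then to identify the only possible multiplicative unit. I will use freely the inclusion $\text{\rm Lwc}(E)\subseteq\text{\rm W}(E)$ and the composition rule recorded earlier, namely that $S\in\text{\rm W}(Y,X)$ and $T\in\text{\rm aLwc}(X,F)$ imply $TS\in\text{\rm Lwc}(Y,F)$.

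First I would isolate a Riesz-decomposition lemma: if $(z_n)$ is a disjoint sequence in $E$ with $|z_n|\le g_n+h_n$ for some $g_n,h_n\in E_+$, then $z_n=u_n+v_n$ with $(u_n)$ and $(v_n)$ disjoint, $|u_n|\le 2g_n$ and $|v_n|\le 2h_n$. This follows by decomposing $z_n^{+}$ and $z_n^{-}$ separately against $g_n+h_n$ via the Riesz decomposition property and noting that the pieces coming from $z_n^{+}$ and from $z_n^{-}$ are disjoint, so $|u_n|,|v_n|\le|z_n|$ (which also yields the disjointness of $(u_n)$ and $(v_n)$). From this I get, first, that scalar multiples and finite sums of $\text{\rm Lwc}$ subsets of $E$ are again $\text{\rm Lwc}$: a disjoint sequence in $\sol(A+B)$ satisfies $|z_n|\le|a_n|+|b_n|$ with $a_n\in A$, $b_n\in B$, so the split $z_n=u_n+v_n$ has $u_n\in\sol(2A)$, $v_n\in\sol(2B)$, both $\text{\rm Lwc}$ sets, hence $\|z_n\|\to0$. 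Since $(S+T)(B_E)\subseteq S(B_E)+T(B_E)$ and $(S+T)(W)\subseteq S(W)+T(W)$ for relatively weakly compact $W$, and since subsets of $\text{\rm Lwc}$ sets are $\text{\rm Lwc}$, this shows both $\text{\rm Lwc}(E)$ and $\text{\rm aLwc}(E)$ are linear subspaces.

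For the multiplicative structure: if $S,T\in\text{\rm Lwc}(E)$ then $T\in\text{\rm W}(E)$ while $S\in\text{\rm Lwc}(E)\subseteq\text{\rm aLwc}(E)$, so $ST\in\text{\rm Lwc}(E)$ by the recorded composition rule; together with $\text{\rm Lwc}(E)\subseteq\text{\rm W}(E)$ this gives (i). If $T\in\text{\rm aLwc}(E)$ and $S\in\text{\rm L}(E)$, then $S$, being weak--weak continuous, carries relatively weakly compact sets onto relatively weakly compact sets, which $T$ then carries onto $\text{\rm Lwc}$ sets; hence $TS\in\text{\rm aLwc}(E)$, so $\text{\rm aLwc}(E)$ is a right ideal of $\text{\rm L}(E)$ and in particular a subalgebra. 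Norm-closedness in both cases again reduces to the Riesz-decomposition lemma: if $T_k\to T$ in operator norm with $T_k$ in the class, $W$ is $B_E$ (resp.\ a relatively weakly compact set) of norm-bound $M$, and $(z_n)$ is disjoint in $\sol(T(W))$ with $|z_n|\le|Tw_n|\le|T_kw_n|+|(T-T_k)w_n|$, then the split $z_n=u_n+v_n$ has $u_n\in\sol(2T_k(W))$ (an $\text{\rm Lwc}$ set, as $T_k$ is in the class) and $\|v_n\|\le 2\|(T-T_k)w_n\|\le 2M\|T-T_k\|$; for fixed $\varepsilon>0$, choosing $k$ with $2M\|T-T_k\|<\varepsilon$ and then letting $n\to\infty$ gives $\limsup_n\|z_n\|\le\varepsilon$, so $\|z_n\|\to0$ and $T(W)$ is $\text{\rm Lwc}$.

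It remains to pin down the unit. The ``if'' direction of each equivalence is immediate since $I_E$ is a two-sided identity. For ``only if'', suppose the algebra $\mathcal A$ (either $\text{\rm Lwc}(E)$ or $\text{\rm aLwc}(E)$) has a multiplicative unit $U$. If $E^a=\{0\}$ then $\mathcal A=\{0\}$ (every $\text{\rm Lwc}$ subset lies in $E^a$ by the remark after Definition~\ref{limited subset}, and singletons are relatively weakly compact), so there is nothing to prove; thus assume $x\in E^a\setminus\{0\}$. For any $x^\ast\in E^\ast$ the rank-one operator $x^\ast\otimes x\colon y\mapsto x^\ast(y)\,x$ maps $B_E$ into the order interval $[-\|x^\ast\|\,|x|,\ \|x^\ast\|\,|x|]$, which is an $\text{\rm Lwc}$ set since $x\in E^a$ (order intervals of the order-continuous ideal $E^a$ are $\text{\rm Lwc}$); hence $x^\ast\otimes x\in\text{\rm Lwc}(E)\subseteq\mathcal A$. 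From $(x^\ast\otimes x)U=x^\ast\otimes x$ we get $(U^\ast x^\ast)\otimes x=x^\ast\otimes x$, and since $x\ne0$ this forces $U^\ast x^\ast=x^\ast$; as $x^\ast\in E^\ast$ was arbitrary, $U^\ast=I_{E^\ast}$, whence $U=I_E\in\mathcal A$. I expect the only genuine work to lie in the Riesz-decomposition lemma and its two applications (the sum-of-$\text{\rm Lwc}$-sets fact and norm-closedness); the unital part is soft once one observes that the rank-one $\text{\rm Lwc}$ operators $x^\ast\otimes x$ already determine $U^\ast$ completely, which is precisely why the degenerate case $E^a=\{0\}$ must be treated apart.
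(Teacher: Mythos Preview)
Your argument is correct and follows the same skeleton as the paper's own proof: the paper too deduces that $\text{\rm Lwc}(E)$ and $\text{\rm aLwc}(E)$ are closed subspaces, invokes \cite[Thm.~2.4]{BLM18} (exactly the composition rule you quote from the preliminaries) for the subalgebra and right-ideal assertions, and declares the unital characterizations ``simply verified''. Where the paper leaves things at ``easy to see'' and ``simply verified'', you supply the actual work---the Riesz-decomposition splitting for sums and norm limits of \text{\rm Lwc} sets, and the rank-one $x^\ast\otimes x$ trick (with the $E^a=\{0\}$ caveat) to force $U=I_E$; this last point is a genuine addition, since the paper appears to read ``unital'' as ``contains $I_E$'', whereas you handle the a priori weaker hypothesis that $\mathcal A$ merely has \emph{some} multiplicative identity.
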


\begin{proof}
It is easy to see that both $\text{\rm Lwc}(E)$ and $\text{\rm aLwc}(E)$ are 
closed subspaces of $\text{\rm L}(E)$, and $\text{\rm Lwc(E)}\subseteq\text{\rm W}(E)$. 
The closedness of $\text{\rm Lwc}(E)$ under composition and 
the fact that $\text{\rm aLwc}(E)$ is a right ideal in $\text{\rm L}(E)$ follow
from \cite[Thm.2.4]{BLM18}.
The conditions on $I_E$, those make the algebras $\text{\rm Lwc}(E)$ and $\text{\rm aLwc}(E)$ unital,
are simply verified.
\end{proof}

\begin{proposition}\label{Mwc-algebra}
For any Banach lattice $E$ the following holds.
\begin{enumerate}[{\rm (i)}]
\item $\text{\rm Mwc}(E)$ is a closed subalgebra of $\text{\rm W}(E)$ and is unital 
iff $I_E$ is \text{\rm Mwc}.
\item $\text{\rm aMwc}(E)$ is a closed left ideal in $\text{\rm L}(E)$ 
$($and hence a subalgebra of $\text{\rm L}(E)$$)$, and is unital iff 
$I_E$ is \text{\rm aMwc}.
\end{enumerate}
\end{proposition}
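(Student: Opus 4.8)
The plan is to mirror, on the $M$-side, the structure of the proof of Proposition~\ref{Lwc-algebra}, using the known duality $T\in\text{\rm Mwc}(E,F)\Leftrightarrow T^\ast\in\text{\rm Lwc}(F^\ast,E^\ast)$ (and the analogous statement for $\text{\rm aMwc}$ from \cite[Thm.2.5]{BLM18}) together with the composition facts already recorded in Lemma~\ref{Composition of Mwc}. First, for (i), I would check that $\text{\rm Mwc}(E)$ is a closed subspace of $\text{\rm L}(E)$: linearity is immediate from the definition (if $\|Tx_n\|\to0$ and $\|Sx_n\|\to0$ for every disjoint bounded $(x_n)$, then so does $\|(\alpha T+\beta S)x_n\|$), and closedness follows from a routine $\varepsilon/3$ argument — if $T_k\to T$ in operator norm and each $T_k\in\text{\rm Mwc}(E)$, then for a disjoint bounded $(x_n)$ with $\|x_n\|\le C$ pick $k$ with $\|T-T_k\|<\varepsilon/(2C)$ and then $N$ with $\|T_kx_n\|<\varepsilon/2$ for $n\ge N$. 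That $\text{\rm Mwc}(E)\subseteq\text{\rm W}(E)$ is classical (\cite[Prop.\,3.6.11]{Mey91}); closedness under composition: if $S,T\in\text{\rm Mwc}(E)$ then $ST\in\text{\rm Mwc}(E)$ because $T\in\text{\rm Mwc}(E)\subseteq\text{\rm aMwc}(E)$ and $S\in\text{\rm Mwc}(E)$, so Lemma~\ref{Composition of Mwc}(i) gives $ST\in\text{\rm Mwc}(E)$. The unitality claim is then trivial: an algebra containing $I_E$ must have $I_E\in\text{\rm Mwc}(E)$, and conversely if $I_E\in\text{\rm Mwc}(E)$ it is the identity of the subalgebra.

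For (ii), I would again start with the subspace and closedness statements, which are exactly as above since the defining condition ``$f_n(Tx_n)\to0$ for all weakly convergent $(f_n)$ in $Y^\ast$ and all disjoint bounded $(x_n)$ in $E$'' is linear in $T$ and stable under operator-norm limits (bound $|f_n(Tx_n)-f_n(T_kx_n)|\le \|f_n\|\,\|T-T_k\|\,\|x_n\|$, with $(\|f_n\|)$ bounded because a weakly convergent sequence is norm bounded). The left-ideal property is the substantive point, and it is precisely Lemma~\ref{Composition of Mwc}(ii): if $T\in\text{\rm aMwc}(E,X)$ and $S\in\text{\rm L}(X,Y)$ then $ST\in\text{\rm aMwc}(E,Y)$; applied with $X=Y=E$ this says $\text{\rm aMwc}(E)$ absorbs left multiplication by arbitrary bounded operators. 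Being a left ideal in the algebra $\text{\rm L}(E)$ it is in particular a subalgebra. Unitality: if $I_E\in\text{\rm aMwc}(E)$ then it is the unit; conversely a subalgebra with a unit $u$ satisfies $u=uI_E\cdot$\dots — more simply, if the subalgebra has identity element $P$, then for any $S\in\text{\rm aMwc}(E)$ we have $S=PS$, but since $\text{\rm aMwc}(E)$ is a left ideal $I_E\cdot S=S\in\text{\rm aMwc}(E)$ anyway; the cleanest phrasing is that $\text{\rm aMwc}(E)$ is unital (as an algebra with the same identity $I_E$ as $\text{\rm L}(E)$) precisely when $I_E$ belongs to it.

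I do not anticipate a genuine obstacle here: every ingredient has been prepared. The only point requiring a moment's care is making the ``closedness under norm limits'' argument for $\text{\rm aMwc}$ uniform in $n$ — one must exploit that $(f_n)$, being weakly convergent, is norm bounded (Banach--Steinhaus), so the estimate $|f_n(T_kx_n)|\to 0$ transfers to $|f_n(Tx_n)|\to 0$; this is the same $\varepsilon/3$ pattern as in the $\text{\rm Mwc}$ case. A secondary subtlety is cosmetic: to know $\text{\rm aMwc}(E)$ is a \emph{left} ideal (rather than right, as with $\text{\rm aLwc}(E)$) one should note this is forced by the asymmetry in Definition~\ref{E to Y}(c) — the disjointness hypothesis is on the domain side, so postcomposition with an arbitrary bounded operator is harmless while precomposition is not, exactly dual to the situation for $\text{\rm aLwc}$. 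I would close by remarking, in parallel with Proposition~\ref{Lwc-algebra}, that the conditions on $I_E$ making these algebras unital ``are simply verified.''
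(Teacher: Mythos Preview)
Your proof is correct and follows essentially the same route as the paper: both derive the multiplicative/ideal structure from Lemma~\ref{Composition of Mwc}\,(i)--(ii) and treat the unitality clauses as trivial. The only minor difference is that where the paper simply gestures at Proposition~\ref{Lwc-algebra} and \cite[Prop.\,3.6.11]{Mey91} for the closed-subspace claims, you spell out direct $\varepsilon/3$ arguments (correctly invoking Banach--Steinhaus to bound $\|f_n\|$ in the $\text{\rm aMwc}$ case); this is more explicit but not a different approach.
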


\begin{proof}
Both $\text{\rm Mwc}(E)$ and $\text{\rm aMwc}(E)$ are closed subspaces of $\text{\rm W}(E)$ by 
Proposition~\ref{Lwc-algebra} and \cite[Prop.\,3.6.11\,(i)]{Mey91}. 
The closedness of $\text{\rm Mwc}(E)$ under composition follows from Lemma~\ref{Composition of Mwc}\,(i)--(ii).
The fact that $\text{\rm aMwc}(E)$ is a right ideal in $\text{\rm L}(E)$ follows from Lemma~\ref{Composition of Mwc}\,(ii).
Conditions on $I_E$ making algebras $\text{\rm Mwc}(E)$ and $\text{\rm aMwc}(E)$ to be unital, are trivial.
\end{proof}

\begin{proposition}\label{about modulus T a T'}
Let $E^\ast$ be a $\text{\rm KB}$-space, let $F$ be Dedekind complete, 
and $F^\ast \in\text{\rm (SP)}$. If $T\in \text{\rm L}_{ob}(E,F)$ then 
$|T|\in \text{\rm aMwc}(E,F)$.
\end{proposition}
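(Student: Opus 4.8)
\noindent
The plan is to verify the defining property of an \text{\rm aMwc} operator for $|T|$ directly, spending the disjointness of a sequence in $E$ on the hypothesis that $E^\ast$ is a KB-space and the weak convergence of a sequence in $F^\ast$ on the Schur property of $F^\ast$; the operator $|T|$ itself will be used only as a bounded operator. First note that, since $F$ is Dedekind complete and $T\in\text{\rm L}_{ob}(E,F)=\text{\rm L}_r(E,F)$, the modulus $|T|$ exists and is a bounded operator from $E$ to $F$ — this is the only use of Dedekind completeness. Now fix a disjoint bounded sequence $(x_n)$ in $E$ and a weakly convergent sequence $(f_n)$ in $F^\ast$, say $f_n\stackrel{\text{\rm w}}{\to}f$; the goal is $f_n(|T|x_n)\to 0$.

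I would then make the two one-sided reductions. On the $E$-side: a KB-space has $\text{\rm o}$-continuous norm, and $E^\ast$ is always Dedekind complete, so every order interval $[-|g|,|g|]$ of $E^\ast$ is an \text{\rm Lwc} set; by the \text{\rm Lwc}/\text{\rm Mwc} duality \cite[Prop.\,3.6.11\,(i)]{Mey91} this means every functional $g$ on $E$ is M-weakly compact, i.e. every norm-bounded disjoint sequence in $E$ is weakly null. Hence $x_n\stackrel{\text{\rm w}}{\to}0$ in $E$, and, $|T|$ being bounded, $|T|x_n\stackrel{\text{\rm w}}{\to}0$ in $F$. On the $F^\ast$-side: since $F^\ast\in\text{\rm (SP)}$, the weakly convergent sequence $(f_n)$ is norm convergent, so $\|f_n-f\|\to 0$. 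Writing $f_n(|T|x_n)=(f_n-f)(|T|x_n)+f(|T|x_n)$, the first summand is bounded in modulus by $\|f_n-f\|\cdot\sup_m\||T|x_m\|\to 0$, while the second tends to $0$ because $f$ is a fixed functional and $|T|x_n\stackrel{\text{\rm w}}{\to}0$. Therefore $f_n(|T|x_n)\to 0$, which is exactly $|T|\in\text{\rm aMwc}(E,F)$.

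An equivalent route goes through the adjoint: by \cite[Thm.\,2.5]{BLM18} it is enough to show $|T|^\ast\in\text{\rm aLwc}(F^\ast,E^\ast)$, that is, that $|T|^\ast$ carries relatively weakly compact subsets of $F^\ast$ onto \text{\rm Lwc} subsets of $E^\ast$; and indeed $F^\ast\in\text{\rm (SP)}$ forces such sets to be relatively norm compact, whence their images under the bounded map $|T|^\ast$ are relatively norm compact in $E^\ast$, and relatively norm compact subsets of the $\text{\rm o}$-continuous lattice $E^\ast$ are \text{\rm Lwc} sets. I do not expect a real obstacle in either presentation; the single point requiring genuine (though routine) lattice-theoretic care — used in both — is that order intervals, equivalently relatively norm compact sets, of an $\text{\rm o}$-continuous Banach lattice are \text{\rm Lwc} sets: this rests on the Dedekind completeness of $E^\ast$ (one dominates a disjoint sequence in the pertinent solid hull by a sequence decreasing to $0$) together with the observation recorded right after Definition~\ref{limited subset} that \text{\rm Lwc} sets lie in the $\text{\rm o}$-continuous part. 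The essential content of the proposition is just that $|T|$ carries no compactness of its own, so all the work is done by the two ambient properties of $E^\ast$ and $F^\ast$.
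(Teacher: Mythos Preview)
Your proof is correct and essentially identical to the paper's: both split $f_n(|T|x_n)$ into $(f_n-f)(|T|x_n)+f(|T|x_n)$, kill the first term with $F^\ast\in\text{\rm (SP)}$, and kill the second via $x_n\stackrel{\text{\rm w}}{\to}0$ (from the $\text{\rm o}$-continuity of $E^\ast$). Your explicit remark on the role of Dedekind completeness and the alternative adjoint route are extras not in the paper, but the main argument matches line for line.
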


\begin{proof}
Let $(x_n)$ be a disjoint bounded sequence in $E$ and $f_n\stackrel{\text{\rm w}}{\to}f$ in $F^\ast $. 
As  $F^\ast\in\text{\rm (SP)}$, we have $\|f_n-f\|\to 0$. Hence
\begin{equation}\label{(**)}
  |(f_n-f)(|T|x_n)|\le\|f_n -f\|\cdot\||T|x_n\| \to 0.
\end{equation}
By 
\cite[2.4.14]{Mey91}, $x_n\stackrel{\text{\rm w}}{\to}0$, and hence
$f(|T|x_n)=|T|^\ast (f)(x_n) \to 0$. It follows from \eqref{(**)} that $f_n(|T|x_n)\to 0$, 
and hence $|T|\in\text{\rm aMwc}(E,F)$. 
\end{proof}

\subsection{}
In certain cases $\text{\rm aL(M)wc}$ operators coincide with bounded 
operators.

\begin{proposition}\label{prop 13}
For any Banach space $X$ the following hold.
\begin{enumerate}[\em (i)]
\item
If $F$ is an AL-space then $\text{\rm aLwc}(X,F)=\text{\rm L}(X,F)$.
\item
If $E$ is an AM-space then $\text{\rm aMwc}(E,Y)=\text{\rm L}(E,Y)$.
\end{enumerate}
\end{proposition}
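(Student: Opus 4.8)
The plan is to treat both parts symmetrically, using the duality $T \mapsto T^\ast$ together with the structural facts that the dual of an AL-space is an AM-space (in fact an order-complete AM-space with unit), and the dual of an AM-space is an AL-space. I would also lean on \cite[Thm.\,2.5]{BLM18}, which gives the equivalence $T\in\text{\rm aLwc}(X,F)\Leftrightarrow T^\ast\in\text{\rm aMwc}(F^\ast,X^\ast)$ and $T\in\text{\rm aMwc}(E,Y)\Leftrightarrow T^\ast\in\text{\rm aLwc}(Y^\ast,E^\ast)$, so that (i) and (ii) are dual to one another and it suffices to prove one of them directly and then transfer.

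For (i): let $T\in\text{\rm L}(X,F)$ with $F$ an AL-space, and let $A\subseteq X$ be relatively weakly compact; I must show $T(A)$ is an Lwc set in $F$, i.e.\ every disjoint sequence $(y_n)$ in $\text{\rm sol}(T(A))$ is norm null. The key point is that in an AL-space the lattice operations interact with the norm additively on disjoint elements: $\big\|\sum_{k=1}^n y_k\big\| = \sum_{k=1}^n \|y_k\|$ for pairwise disjoint $y_k$ with, say, the same sign (more generally $\|\,|y_1|\vee\dots\vee|y_n|\,\| = \sum\|y_k\|$). Combined with the fact that $\text{\rm sol}(T(A))$ is norm-bounded (because $A$ is bounded and $F$ being an AL-space has an order-continuous norm, so the solid hull of a bounded set is bounded), this forces $\sum_n \|y_n\| < \infty$, hence $\|y_n\|\to 0$. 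Concretely: replacing $y_n$ by $|y_n|$ (which is still disjoint and lies in the solid hull), pick $a_n\in T(A)$ with $|y_n|\le|a_n|$; then $\big\|\bigvee_{k=1}^n |y_k|\big\| = \sum_{k=1}^n\|y_k\|$ while $\bigvee_{k=1}^n|y_k|$ stays bounded by $\sup_n\|\,|a|\,:\,a\in\text{\rm sol}(T(A))\|=:M<\infty$, so every partial sum of $\sum\|y_k\|$ is at most $M$. Thus $T(A)$ is Lwc and $T\in\text{\rm aLwc}(X,F)$; the reverse inclusion $\text{\rm aLwc}(X,F)\subseteq\text{\rm L}(X,F)$ was already noted in the excerpt.

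For (ii): either argue directly or dualize. Directly, let $E$ be an AM-space, $T\in\text{\rm L}(E,Y)$, take a disjoint bounded $(x_n)$ in $E$ and a weakly convergent $(f_n)$ in $Y^\ast$; I want $f_n(Tx_n)\to 0$. In an AM-space a disjoint bounded sequence need not be norm null (that is exactly why $I_{c_0}$ is not Mwc), so the naive estimate fails and one genuinely needs the weak convergence of $(f_n)$. The clean route is duality: $T^\ast: Y^\ast \to E^\ast$ and $E^\ast$ is an AL-space, so by part (i) (applied with $X = Y^\ast$, $F = E^\ast$) we get $T^\ast\in\text{\rm aLwc}(Y^\ast,E^\ast)$, and then \cite[Thm.\,2.5]{BLM18} gives $T\in\text{\rm aMwc}(E,Y)$; again the inclusion $\text{\rm aMwc}(E,Y)\subseteq\text{\rm L}(E,Y)$ is trivial.

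The main obstacle I anticipate is purely bookkeeping in (i): making sure the solid hull of a bounded set in an AL-space is bounded (this uses order-continuity of the norm, or directly the AL-identity on positive parts, since $|y|\le|a|$ gives $\|y\|\le\|a\|$ anyway, so boundedness of $\text{\rm sol}(T(A))$ is actually immediate from $\|y\| = \|\,|y|\,\|\le\|\,|a|\,\| = \|a\|$) and that the disjointness-additivity of the norm is applied to $|y_n|$ rather than $y_n$ themselves. Once the AL-identity $\|\bigvee_{k\le n}|y_k|\| = \sum_{k\le n}\|y_k\|$ is in hand, everything else is a two-line summability argument, and the dualization in (ii) is then automatic.
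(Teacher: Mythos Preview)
Your argument for (i) has a genuine gap: it never uses the relative weak compactness of $A$, only its boundedness. If the reasoning were correct it would show that \emph{every} bounded subset of an AL-space is Lwc, i.e.\ that every bounded operator into an AL-space is Lwc. This is false; the paper itself notes that $I_{\ell^1}\notin\text{\rm Lwc}(\ell^1)$. Concretely, take $F=\ell^1$, $A=B_X$, and look at the unit vectors $(e_n)$: they form a disjoint sequence in $\text{\rm sol}(B_{\ell^1})$ with $\|e_n\|=1\not\to 0$. The specific step that fails is the claim that $\big\|\bigvee_{k\le n}|y_k|\big\|$ is bounded by $M=\sup\{\|a\|:a\in\text{\rm sol}(T(A))\}$. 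Each $|y_k|$ is dominated by some $|a_k|\in\text{\rm sol}(T(A))$, but there is no single element of $\text{\rm sol}(T(A))$ dominating $\bigvee_{k\le n}|y_k|$, and in an AL-space $\big\|\bigvee_{k\le n}|y_k|\big\|=\sum_{k\le n}\|y_k\|$, which can grow without bound.

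What the paper does instead is invoke the nontrivial structure theorem \cite[Thm.\,5.56]{AlBu}: in an AL-space, relatively weakly compact sets are Lwc sets (this is essentially the Dunford--Pettis characterization of weak compactness in $L^1$ via uniform integrability). Since $T$ is continuous, $T(A)$ is relatively weakly compact in $F$, hence Lwc. Your duality step for (ii) is fine and coincides with the paper's, but it rests on (i), so you need to replace the direct AL-norm computation there by an appeal to this theorem (or reprove it).
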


\begin{proof}
(i) Let $S\in\text{\rm L}(X,F)$ and let $C\subseteq X$ be relatively weakly compact. 
Then $S(C)\subseteq F$ is relatively weakly compact. By \cite[Thm.5.56]{AlBu}, 
$S(C)$ is an \text{\rm Lwc} subset of $F$,
and hence $S\in\text{\rm aLwc}(X,F)$.

\medskip
(ii) Let $T\in\text{\rm L}(E,Y)$. Since $E^\ast $ is an AL-space, it follows from (i) that 
$T^\ast \in\text{\rm aLwc}(Y^\ast ,E^\ast )$. Then $T\in\text{\rm aMwc}(E,Y)$ by \cite[Thm.2.5]{BLM18}.
\end{proof}

\subsection{}
The next two theorems describes some conditions for
$\text{\rm aMwc}_+(E,F)\subseteq\text{\rm Mwc}(E,F)$ and $\text{\rm aLwc}_+(E,F)\subseteq\text{\rm Lwc}(E,F)$. 

\begin{definition}\label{E has PSP}
{\em A Banach lattice $E$ has
the {\em positive Schur property} (shortly, $E\in(\text{\rm PSP})$)
if positive \text{\rm w}-null sequences in $E$ are norm null (cf. \cite{Wnuk2013}).
}
\end{definition}

\begin{theorem}\label{necessary conditions for aMwc to be Mwc}
Let $E^\ast$ be a $\text{\rm KB}$-space, let $F$ be Dedekind complete,
and let $\text{\rm aMwc}_+(E,F)\subseteq\text{\rm Mwc}(E,F)$. Then either 
$E\in\text{\rm (PSP)}$ or $F=F^a$.
\end{theorem}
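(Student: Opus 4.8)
The plan is to prove the contrapositive: assuming $E\notin(\text{PSP})$ and $F\neq F^a$, I will produce a positive operator $T\in\text{aMwc}(E,F)\setminus\text{Mwc}(E,F)$, contradicting the hypothesis. Since $E\notin(\text{PSP})$ there is a disjoint sequence $(x_n)$ in $E_+$ with $\|x_n\|=1$ and $x_n\xrightarrow{\text{w}}0$ — after passing, if necessary, to such a disjoint sequence by the standard characterization of the positive Schur property (cf.\ \cite{Wnuk2013}); and since some $y\in F_+$ satisfies $y\notin F^a$, there is a disjoint sequence $(y_n)$ in $[0,y]$ with $\gamma:=\inf_n\|y_n\|>0$.

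The engine of the construction is an auxiliary operator. As the $y_n$ are disjoint, $\bigvee_{n=M}^{N}y_n=\sum_{n=M}^{N}y_n\le y$, so for $\lambda=(\lambda_n)\in c_0$ the partial sums of $\sum_n\lambda_n y_n$ are norm-Cauchy, $\|\sum_{n=M}^{N}\lambda_n y_n\|\le(\sup_{n\ge M}|\lambda_n|)\,\|y\|\to 0$; hence $R\colon c_0\to F$, $R\lambda:=\sum_n\lambda_n y_n$, is a well-defined positive bounded operator with $Re_n=y_n$. The crucial point is: \emph{for every bounded positive $Q\colon E\to c_0$ the composition $T:=RQ$ is $\text{aMwc}$}. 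Indeed, if $(z_m)$ is disjoint and bounded in $E$, then $z_m\xrightarrow{\text{w}}0$ (here $E^\ast$ being a KB-space is used, via \cite[2.4.14]{Mey91}), so $Qz_m\xrightarrow{\text{w}}0$ in $c_0$ and $RQz_m\xrightarrow{\text{w}}0$ in $F$; and if $f_m\xrightarrow{\text{w}}f$ in $F^\ast$ then $R^\ast f_m\xrightarrow{\text{w}}R^\ast f$ in $(c_0)^\ast=\ell^1$, hence $\|R^\ast f_m-R^\ast f\|\to 0$ by the Schur property of $\ell^1$. Therefore $f_m(Tz_m)=(R^\ast f_m-R^\ast f)(Qz_m)+f(RQz_m)\to 0$, so $T\in\text{aMwc}(E,F)$.

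It remains to pick $Q$ so that $T=RQ\notin\text{Mwc}(E,F)$, and it suffices to arrange $Qx_n\ge\tfrac12 e_n$ for all $n$: then $Tx_n=R(Qx_n)\ge\tfrac12 Re_n=\tfrac12 y_n$, so $\|Tx_n\|\ge\tfrac12\gamma$ while $(x_n)$ is disjoint and bounded. I would take $Qx:=(\psi_n(x))_n$ for $(\psi_n)\subseteq E^\ast_+$ with $\sup_n\|\psi_n\|<\infty$, $\psi_n(x_n)\ge\tfrac12$, and $\psi_n\xrightarrow{\text{w}^\ast}0$ (the last guarantees $Qx\in c_0$, and then $Q$ is positive and bounded). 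Such candidates exist on the separable sublattice $E_0:=\overline{\operatorname{span}}\{x_n\}$, whose elements are precisely the norm-convergent series $\sum a_n x_n$: the coordinate functionals $x_n^\ast$ lie in $B_{(E_0)^\ast_+}$, satisfy $x_n^\ast(x_m)=\delta_{nm}$, and $x_n^\ast\xrightarrow{\text{w}^\ast}0$ in $(E_0)^\ast$ since $\|\sum_{n=M}^{N}a_n x_n\|\ge\max_{M\le n\le N}|a_n|$ forces the coefficients of any fixed element of $E_0$ to tend to $0$.

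The main obstacle is transporting these functionals to $E$ while preserving positivity and weak$^\ast$-nullity; this is exactly where the full force of $E\notin(\text{PSP})$ — beyond the mere existence of a normalized disjoint weakly null sequence — enters. After reducing to $E_0$ (whose dual ball is weak$^\ast$-metrizable) one extracts from suitable positive extensions a subsequence $\psi_{n_k}\xrightarrow{\text{w}^\ast}\psi\ge 0$; if $\psi=0$ this yields $Q\colon E\to c_0$ directly, and if $\psi\neq 0$ one routes instead through $c$ (note $c^\ast\cong\ell^1\oplus\mathbb{R}$ still has the Schur property, so the crucial point above applies verbatim with $c$ in place of $c_0$, together with the companion operator $R'\colon c\to F$, $R'\lambda=(\lim_n\lambda_n)\bigvee_n y_n+\sum_n(\lambda_n-\lim_n\lambda_n)y_n$, which is positive and bounded by the same disjointness estimate — this is the operator underlying $\text{aMwc}(c,F)=\text{L}(c,F)$ in Proposition~\ref{prop 13}). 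With $T=RQ$ (resp.\ $R'Q$) thus constructed, $T$ is positive, $\text{aMwc}$, and not $\text{Mwc}$, contradicting $\text{aMwc}_+(E,F)\subseteq\text{Mwc}(E,F)$; hence $E\in(\text{PSP})$ or $F=F^a$. (A variant avoiding the $\ell^1$-Schur device uses the lemma that for any $y\in F_+$, any disjoint sequence $(w_m)$ in $[0,y]$, and any weakly convergent $(f_m)$ in $F^\ast$ one has $f_m(w_m)\to 0$ — proved by restricting to the AM-space ideal $F^y\cong C(K)$ and combining the Dunford--Pettis property of $C(K)$ with the fact that disjoint bounded sequences in $C(K)$ are weakly null, its dual being a KB-space — applicable once $T$ is known to send disjoint bounded sequences to order-bounded disjoint ones.)
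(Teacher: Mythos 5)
Your codomain-side construction is sound and even elegant: $R\colon c_0\to F$, $R\lambda=\sum_n\lambda_ny_n$, is well defined, positive and bounded, and your ``crucial point'' (that $RQ$ is aMwc for every positive bounded $Q\colon E\to c_0$, via the Schur property of $\ell^1=(c_0)^\ast$ and the weak nullity of disjoint bounded sequences in $E$) is correct; so is the lower estimate $\|RQx_n\|\ge\tfrac12\gamma$ once $Qx_n\ge\tfrac12 e_n$. The genuine gap is exactly where you flag ``the main obstacle'': the existence of the domain-side operator $Q$. You need functionals $\psi_n\in E^\ast_+$, uniformly bounded, with $\psi_n(x_n)\ge\tfrac12$ and $(\psi_n(x))_n\in c_0$ (or at least $\in c$) \emph{for every} $x\in E$, i.e.\ $(\psi_n)$ weak$^\ast$-null (or weak$^\ast$-convergent) in $E^\ast$. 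Positive norm-one extensions of the coordinate functionals of $E_0=\overline{\operatorname{span}}\{x_n\}$ do exist (Hahn--Banach against $p(x)=\|x^+\|$), but nothing forces them, or any admissible choice, to be weak$^\ast$-convergent on all of $E$: the weak$^\ast$-metrizability you invoke is that of $B_{(E_0)^\ast}$, while membership of $Qx$ in $c_0$ or $c$ is a statement about the weak$^\ast$ topology of $E^\ast$, and $B_{E^\ast}$ need not be weak$^\ast$-sequentially compact. The hypotheses of the theorem allow $E=\ell^\infty$ (its dual is an AL-space, hence KB, and $\ell^\infty\notin(\text{\rm PSP})$ with $x_n=e_n$); there the natural candidates $\psi_n=\delta_n$ admit no weak$^\ast$-convergent subsequence, and no argument is offered that some other admissible choice does. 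So the ``extract a subsequence $\psi_{n_k}\stackrel{\text{\rm w}^\ast}{\to}\psi$'' step is unjustified, and the concluding parenthetical variant is likewise only a sketch (it presupposes that $T$ maps disjoint bounded sequences to order bounded disjoint ones, which is not established).

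For comparison, the paper sidesteps this entirely by working on the other side: it sends $E$ into $\ell^\infty$ by $Tx=(f_n(x))_n$ with \emph{arbitrary} norming functionals $f_n\in E^\ast_+$ (no weak$^\ast$-nullity needed, since $\ell^\infty$ accepts any bounded sequence), and then uses the hypotheses $F$ Dedekind complete and $F\neq F^a$ to produce a lattice embedding $S\colon\ell^\infty\to F$; the aMwc property of $ST$ comes from the Dunford--Pettis property of $\ell^\infty$ (so $S$ is wDP) together with $z_n\stackrel{\text{\rm w}}{\to}0$ for disjoint bounded $(z_n)$, and the failure of Mwc from the embedding estimate $\|Sa\|\ge M\|a\|_\infty$. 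If you want to keep your $R\colon c_0\to F$ route (which is attractive because it uses only $F\neq F^a$ on that side), you must either prove the existence of a positive, uniformly bounded, weak$^\ast$-convergent sequence $(\psi_n)$ norming $(x_n)$ --- which is false in general --- or replace $c_0$ by $\ell^\infty$ on the domain side and then you are back to needing a positive extension of $R$ to $\ell^\infty$, i.e.\ essentially the paper's embedding argument, where Dedekind completeness of $F$ is what makes it work.
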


\begin{proof}
Suppose $E\not\in\text{\rm (PSP)}$ and $F\ne F^a$. 
We construct a positive \text{\rm aMwc} operator 
from $E$ to $F$ that is not \text{\rm Mwc}. 
By assumption, there exists a disjoint w-null sequence $(x_n)$ in $E_+$ with $\|x_n\|\not\to 0$. 
WLOG, $\|x_n\|=1$ for all $n$. There exists a sequence $(f_n)$ in $E^\ast_+$ with $\|f_n\|=1$,  
such that 
\begin{equation}\label{1x}
   |f_n(x_n)|\ge\frac{1}{2} \ \ \ \ (\forall n\in{\mathbb N}).
\end{equation}
Define a positive operator $T: E\to\ell^\infty$ by 
\begin{equation}\label{2x}
   Tx:=[(f_n(x))_n] \ \ \ \ (\forall x\in E).
\end{equation}
Since $F$ is Dedekind complete 
and $F\ne F^a$, there exists a lattice embedding 
$S:\ell^\infty\to F$ (cf. \cite[Thm.4.51]{AlBu}). Then, for some $M>0$:
\begin{equation}\label{3x}
   \|Sa\|\ge M\|a\|_\infty \ \ \ (\forall a\in\ell^\infty).
\end{equation}
Since $\ell^\infty\in(\text{\rm DPP})$ (cf. \cite[Thm.5.85]{AlBu}) then $S\in\text{\rm wDP}(\ell^\infty,F)$ 
\cite[p.349]{AlBu}, that is,  for each $g_n\stackrel{\text{\rm w}}{\to}g$ in $F^\ast$
and each $y_n\stackrel{\text{\rm w}}{\to}0$ in $\ell^\infty$:
$$
   g_n(Sy_n)\to 0.
$$ 
Let $(z_n)$ be disjoint bounded in $E$ and let $g_n\stackrel{\text{\rm w}}{\to}g$ in $F^\ast$.
Since $E^\ast$ has o-continuous norm, $z_n\stackrel{\text{\rm w}}{\to}0$ 
in $E$ by~\cite[2.4.14]{Mey91}. 
Then $Tz_n\stackrel{\text{\rm w}}{\to}0$ in $\ell^\infty$.
It follows from \eqref{3x} that $g_n(STz_n)\to 0$. Therefore, $ST\in\text{\rm aMwc}(E,F)$. 
By utilizing \eqref{1x}, \eqref{2x}, and \eqref{3x}, we obtain
\begin{equation}\label{5x}
   \|STx_n\|=\|S[(f_k(x_n))_k]\|\ge 
   M\|(f_k(x_n))_k\|_\infty\ge M|f_n(x_n)|\ge\frac{M}{2}>0.
\end{equation}
It follows from \eqref{5x} that $ST\not\in\text{\rm Mwc}(E,F)$, and we are done.
\end{proof}

\noindent
The following result is an \text{\rm Lwc} version of 
Theorem~\ref{necessary conditions for aMwc to be Mwc}.

\begin{theorem}\label{sufficient conditions for aLwc subseteq Lwc}
If $\text{\rm aLwc}_+(E,F)\subseteq\text{\rm Lwc}(E,F)$, then either 
$E^\ast$ is a $\text{\rm KB}$-space, or $F^\ast \in\text{\rm (PSP)}$.
\end{theorem}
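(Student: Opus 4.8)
The plan is to prove the contrapositive, in the style of Theorem~\ref{necessary conditions for aMwc to be Mwc}: assume that $E^\ast$ is \emph{not} a KB-space and $F^\ast\notin(\text{\rm PSP})$, and produce a positive operator $T\in\text{\rm aLwc}_+(E,F)\setminus\text{\rm Lwc}(E,F)$. First I would unpack the two negative hypotheses. Since $F^\ast\notin(\text{\rm PSP})$, there is a disjoint \text{\rm w}-null sequence $(g_n)$ in $F^\ast_+$ with $\|g_n\|=1$ and some $y_n\in B_F$ (WLOG $y_n\in F_+$, after passing to $|y_n|$ and using positivity of $g_n$) with $g_n(y_n)\ge\tfrac12$ for all $n$. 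Since $E^\ast$ is not a KB-space, by the standard characterization (\cite[Prop.\,3.6.11]{Mey91}\,/\,\cite[Thm.4.61]{AlBu}) $E^\ast$ contains a lattice copy of $c_0$; dually this should give a positive operator $R:E\to\ell^\infty$ that is \emph{not} Mwc, i.e. there is a disjoint bounded $(x_n)$ in $E_+$ with $\|Rx_n\|_\infty\not\to0$, and indeed one can arrange $R$ so that $\|Rx_n\|_\infty\ge\delta>0$; equivalently one picks $h_n\in E^\ast_+$, $\|h_n\|=1$, forming (essentially) a $c_0$-sequence, and sets $Rx=[(h_n(x))_n]$, choosing the $x_n$ so that $h_n(x_n)\ge\delta$.

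Next I would assemble the operator. Define $\Phi:\ell^\infty\to F$ by $\Phi(a)=\sum_n a_n\,y_n$ — wait, this need not converge in norm; instead the cleaner route is to build a single positive operator directly. Set $T:E\to F$ by $Tx:=\sum_n h_n(x)\,y_n$ whenever this converges; to guarantee convergence, replace $y_n$ by $2^{-n}y_n$ and simultaneously require $g_n(2^{-n}y_n)$-type lower bounds — but that kills the lower bound. The correct fix, mirroring the previous proof's two-step factorization $E\xrightarrow{T}\ell^\infty\xrightarrow{S}F$, is to keep $R:E\to\ell^\infty$ as above and produce $S:\ell^\infty\to F$ positive with $S\in\text{\rm aLwc}(\ell^\infty,F)$ (automatic if I can show every bounded operator out of $\ell^\infty$ into an appropriate $F$, restricted to weakly compact sets, lands in an Lwc set — but that is exactly what one must check) and with $\|SRx_n\|\not\to0$. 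Here is where the disjointness of $(g_n)$ in $F^\ast$ and the $c_0$-structure interact: one shows $SR$ is aLwc because a relatively weakly compact set $C\subseteq E$ is mapped by $R$ into a relatively weakly compact (indeed, by Rosenthal/Grothendieck, a \emph{limited}, hence DP) subset of $\ell^\infty$, and then $S$ applied to a set on which every disjoint \text{\rm w}$^\ast$-null (a fortiori disjoint \text{\rm w}-null) sequence of functionals acts uniformly null yields an Lwc set. The lower bound $\|SRx_n\|\ge g_n(SRx_n)=(S^\ast g_n)(Rx_n)$ is then estimated: $S^\ast g_n$ should be supported near the $n$-th coordinate so that $(S^\ast g_n)(Rx_n)\gtrsim h_n(x_n)\ge\delta$.

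Because arranging a \emph{single} clean $S$ with both properties is fiddly, the more robust plan is to imitate Theorem~\ref{necessary conditions for aMwc to be Mwc} essentially verbatim but on the dual side: note the desired statement is, up to the known duality $T\in\text{\rm aLwc}(E,F)\Leftrightarrow T^\ast\in\text{\rm aMwc}(F^\ast,E^\ast)$ and $T\in\text{\rm Lwc}(E,F)\Leftrightarrow T^\ast\in\text{\rm Mwc}(F^\ast,E^\ast)$ (\cite[Thm.2.4, Thm.2.5]{BLM18}), almost the dual of that theorem. So I would instead argue: suppose $F^\ast\notin(\text{\rm PSP})$ and $(E^\ast)$ is not a KB-space, equivalently $E^{\ast\ast}\ne (E^\ast)^a$ fails in the precise sense needed (a KB-space is exactly one whose dual — or rather which — is a band in its bidual; the usable form is ``$E^\ast$ is not a KB-space $\Leftrightarrow$ $c_0$ embeds as a sublattice of $E$'' by \cite[Thm.4.60]{AlBu}). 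Then $c_0\hookrightarrow E$ as a closed sublattice with a positive projection onto a situation controllable by the AM-structure, and one transfers the construction. Concretely: let $J:c_0\to E$ be the lattice embedding with canonical basis $e_n\mapsto x_n$ disjoint in $E_+$; let $Q:F^\ast\to ?$ capture the non-PSP sequence. Then $T := (\text{embedding }c_0\to E)^{\text{pre-adjoint-type construction}}$... — this is getting circular, so in the write-up I will commit to the explicit $E\xrightarrow{R}\ell^\infty\xrightarrow{S}F$ factorization as above, which is the honest analogue of the published proof.

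The main obstacle I anticipate is verifying $S\in\text{\rm aLwc}(\ell^\infty,F)$, i.e. that $S$ maps relatively weakly compact subsets of $\ell^\infty$ to Lwc subsets of $F$, while \emph{simultaneously} keeping the lower bound $\|SRx_n\|\not\to0$: these pull in opposite directions, exactly as $\text{\rm (DPP)}$ of $\ell^\infty$ was exploited in Theorem~\ref{necessary conditions for aMwc to be Mwc}. The resolution should again use that $\ell^\infty\in(\text{\rm DPP})$, so relatively weakly compact subsets of $\ell^\infty$ are actually limited (hence DP) sets, and that $S$, being built from the disjoint \text{\rm w}-null $(g_n)$ and bounded $(y_n)$, carries DP sets of $\ell^\infty$ to Lwc sets of $F$: for a disjoint sequence $(v_k)$ in $\text{\rm sol}(S(A))$ one tests against the $g_n$ — or rather uses that $\|v_k\|=\sup_n g'(v_k)$ over a suitable norming set, and the DP/limited-ness of $A$ forces $\sup_{a\in A}|g_n(Sa)|=\sup_{a\in A}|(S^\ast g_n)(a)|\to0$ because $S^\ast g_n$ is \text{\rm w}$^\ast$-null (or \text{\rm w}-null) in $(\ell^\infty)^\ast$ — making the disjoint sequence norm-null. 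Once both facts are in place, Lemma~\ref{Composition of Mwc}-type composition arguments (specifically: aLwc precomposed with a weakly compact operator is Lwc; and $R$ restricted to a relatively weakly compact set has relatively weakly compact, indeed DP, image) finish that $SR\in\text{\rm aLwc}(E,F)$, while \eqref{5x}-style estimates give $SR\notin\text{\rm Lwc}(E,F)$, completing the contrapositive.
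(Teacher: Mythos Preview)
Your proposal never reaches a proof: you correctly set up the contrapositive and correctly extract from $F^\ast\notin(\text{PSP})$ a disjoint \text{w}-null sequence $(g_n)$ in $F^\ast_+$ with $g_n(y_n)\ge\tfrac12$, but the factorization $E\xrightarrow{R}\ell^\infty\xrightarrow{S}F$ runs into exactly the obstacles you flag and do not resolve. The operator $S:\ell^\infty\to F$, $a\mapsto\sum a_ny_n$, is ill-defined; the rescalings destroy the lower bound; and even granting that relatively weakly compact subsets of $\ell^\infty$ are limited, to show $S(A)$ is Lwc you would need $S^\ast f_n\stackrel{\text{w}^\ast}{\to}0$ for \emph{every} disjoint bounded $(f_n)$ in $F^\ast$, which you have no way to force. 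Your attempted dualization via \cite[Thm.2.4, 2.5]{BLM18} also stalls, and the aside ``$E^\ast$ is not a KB-space $\Leftrightarrow$ $c_0$ embeds as a sublattice of $E$'' is false (that characterizes order-continuity of the norm of $E$, not of $E^\ast$).

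The missing idea is to factor through $\ell^1$ rather than $\ell^\infty$. The hypothesis ``$E^\ast$ is not a KB-space'' means the norm of $E^\ast$ is not order-continuous, so there exist a disjoint sequence $(x^\ast_n)$ and $x^\ast$ in $E^\ast$ with $0\le x^\ast_n\le x^\ast$ and $\|x^\ast_n\|=1$. Order-boundedness makes $Tx:=(x^\ast_k(x))_k$ a bounded positive operator $E\to\ell^1$ (since $\sum_k x^\ast_k(|x|)\le x^\ast(|x|)$), and $Sa:=\sum_k a_ky_k$ is a well-defined bounded positive operator $\ell^1\to F$ with no convergence issue. Now the Schur property of $\ell^1$ does the work that you were trying to squeeze out of the DPP of $\ell^\infty$: for any \text{w}-null $(z_n)$ in $E$ one has $\|Tz_n\|_{\ell^1}\to0$, hence $f_n(STz_n)\to0$ for every disjoint $(f_n)$ in $B_{F^\ast}$, so $ST\in\text{aLwc}_+(E,F)$. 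Finally $(ST)^\ast$ fails to be Mwc on the disjoint \text{w}-null $(g_n)$ because $\|(ST)^\ast g_n\|\ge|g_n(y_n)|\ge\tfrac12$, whence $ST\notin\text{Lwc}(E,F)$.
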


\begin{proof}
Suppose $E^\ast $  is not a $\text{\rm KB}$-space
and $F^\ast \not\in\text{\rm (PSP)}$.
Since the norm in $E^\ast$ is not $\text{\rm o}$-continuous,
there exist a disjoint sequence $(x^\ast_n)$ in $E^\ast $ and $x^\ast \in E^\ast $ such that
$0\le x^\ast_n \le x^\ast $ and $\|x^\ast_n\|=1$ for all $n\in{\mathbb N}$. 
As $F^\ast \not\in\text{\rm (PSP)}$, there is a disjoint w-null sequence $(f_n)$ in $F^\ast_+$ 
with  $\|f_n\|\not\to 0$ by \cite[Thm.7]{Wnuk93}. 
By passing to subsequence and scaling, we may assume that $\|f_n\|\ge 1$ for every $n$. 
Choose $y_n\in F_+$ such that $\|y_n\|=1$ and  
$|f_n(y_n)|\ge\frac{1}{2}$ for all $n\in{\mathbb N}$. 
Define positive operators $T:E\to \ell^1$ and $S:\ell^1\to F$ by 
$$
   Tx:=(x^\ast_k(x))_k \ \ \& \ \ \ Sa:=\sum_{k=1}^\infty a_k y_k .
$$ 
Take a \text{\rm w}-null $(z_n)$ in $E$, and let $(f_n)$ 
be disjoint in $B_{F^\ast}$. As $\ell^1\in(\text{\rm SP})$, then 
$\|Tz_n\|\to 0$, and hence
\begin{equation}\label{6x}
   |f_n(STz_n)|\le\|f_n\|\cdot\|S\|\cdot\|Tz_n\|\to 0.
\end{equation} 
By \cite[Prop.1]{EAS}, \eqref{6x} implies $ST\in\text{\rm aLwc}_+(E,F)$. 

If the operator $ST$ were to be \text{\rm Lwc}, then $(ST)^\ast$ 
would be \text{\rm Mwc}. However,
$$
   \|(ST)^\ast (f_n)\|=\|T^\ast S^\ast (f_n)\|=\|\sum_{k=1}^\infty f_n(y_k)\|\ge |f_{n}(y_n)|\ge\frac{1}{2}
$$
for each $n$, which shows $(ST)^\ast \notin\text{\rm Mwc}(F^\ast ,E^\ast )$ and hence $ST\notin\text{\rm Lwc}(E,F)$.
This contradiction completes the proof.
\end{proof}

\subsection{}
In general, $\text{\rm K}(E,F)\not\subseteq \text{\rm aLwc}(E,F)\cup\text{\rm aMwc}_+(E,F)$, 
as the next example shows.

\begin{example}{\em
Consider $T:\ell^1\to\ell^\infty$ defined as
$$
  T(\alpha)=\left(\sum_{n=1}^\infty\alpha_n\right)e, \quad \text{\rm where} \quad e=(1,1,\dots).
$$
$T$ is compact, but it is neither an aLwc nor aMwc operator. 
}
\end{example}

\noindent
We know from \cite[Thm.\,1]{EAS} that every  Dunford--Pettis (resp. compact)  
$T:X\to F$ is an aLwc operator iff $F=F^a$. 
The following extends this to almost limited operators.

\begin{proposition}\label{aLim=aLwc}
$F=F^a$ iff $\text{\rm aLim}(X,F)=\text{\rm aLwc}(X,F)$. 
\end{proposition}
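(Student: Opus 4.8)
The plan is to prove both implications. For the forward direction, assume $F=F^a$ and let $T\in\text{\rm aLim}(X,F)$; I must show $T(B_X)$ is an Lwc set. Since the norm in $F$ is o-continuous, a bounded set is Lwc precisely when it is L-weakly compact in the classical sense, and it suffices to show that every disjoint sequence in $\sol(T(B_X))$ is norm-null. Let $(y_n)$ be such a disjoint sequence; choosing $x_n\in B_X$ with $|y_n|\le|Tx_n|$, I want to extract suitable functionals. Because $F=F^a$, order intervals are weakly compact and disjoint sequences tend weakly to $0$; in particular one can find a disjoint sequence $(g_n)$ in $B_{F^\ast}$ with $g_n(|y_n|)=\|y_n\|$ (using that the norm is attained on a functional supported on the band generated by $y_n$, after passing to a subsequence and using disjointness to make the $g_n$ disjoint as well, cf. the standard lemma behind \cite[Thm.4.36]{AlBu}). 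Since $F=F^a$, $F^\ast$ has (d) (every $\sigma$-Dedekind complete Riesz space has (d)), and a disjoint bounded sequence in $F^\ast$ is w$^\ast$-null — this is the point where $F=F^a$ is used in the form "disjoint bounded sequences in $F^\ast$ are w$^\ast$-null," which holds because $F=F^a$ makes $F$ an ideal in $F^{\ast\ast}$ on which... more carefully: $F=F^a$ iff $F$ has o-continuous norm, and then $B_{F^\ast}$ disjoint sequences are w$^\ast$-null by \cite[Thm.4.34]{AlBu}. Then almost limitedness of $T(B_X)$ forces $\sup_{x\in B_X}|g_n(Tx)|\to 0$, so in particular $\|y_n\|=g_n(|y_n|)\le \sup_{x}|g_n(Tx)|\to 0$ (handling the modulus by a further disjoint splitting $g_n=g_n^+-g_n^-$, each still w$^\ast$-null and disjoint). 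Hence $T(B_X)$ is Lwc and $T\in\text{\rm aLwc}(X,F)$.

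For the reverse direction, assume $\text{\rm aLim}(X,F)=\text{\rm aLwc}(X,F)$ and suppose toward a contradiction that $F\ne F^a$. I will exhibit an operator that is almost limited but not aLwc. The natural candidate uses the lattice embedding $S:\ell^\infty\to F$ (available by \cite[Thm.4.51]{AlBu}, as in the proof of Theorem~\ref{necessary conditions for aMwc to be Mwc}, at least when $F$ is Dedekind complete; in general one works with the closed sublattice generated by a suitable disjoint sequence in $F\setminus F^a$). Take $X=c_0$ (or $X=\ell^\infty$) and let $J:c_0\to\ell^\infty$ be the canonical embedding, which is limited hence almost limited, and set $T=S\circ J:c_0\to F$. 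Then $T(B_{c_0})$ is almost limited because limited sets map to limited sets under bounded operators and limited sets are almost limited. On the other hand, restricting to the image of the unit vectors $e_n$, the disjoint sequence $(Se_n)$ lies in $\sol(T(B_{c_0}))$ and satisfies $\|Se_n\|\ge M\|e_n\|_\infty=M>0$ by the lattice-embedding estimate \eqref{3x}-style inequality $\|Sa\|\ge M\|a\|_\infty$; so $T(B_{c_0})$ is not an Lwc set, i.e. $T\notin\text{\rm aLwc}(c_0,F)$ (note $(e_n)$ is weakly null in $c_0$, hence $\{e_n\}$ is relatively weakly compact, so this genuinely contradicts the aLwc property). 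This contradicts the hypothesis, so $F=F^a$.

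The main obstacle I anticipate is the handling of moduli and disjointness in $F^\ast$ in the forward direction: "almost limited" only controls \emph{disjoint} w$^\ast$-null sequences, so I must be careful to produce the norming functionals $g_n$ as an honest disjoint sequence (or to reduce to that case by a gliding-hump / disjointification argument on the $g_n$), and then to pass the modulus $|g_n|$ or the positive/negative parts through while keeping disjointness and the w$^\ast$-null property — this is exactly where property (d) of $F^\ast$ (valid since $F=F^a$ is Dedekind-$\sigma$-complete-like enough, or directly since o-continuity gives it) is needed. A secondary technical point is that for the reverse direction one should not assume $F$ Dedekind complete gratuitously; the cleanest fix is to invoke the characterization $F\ne F^a$ $\Rightarrow$ $F$ contains a lattice copy of $\ell^\infty$ on which the sup-norm is equivalent, which is standard and lets the argument go through verbatim.
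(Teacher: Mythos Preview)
Your proposal has genuine gaps in both directions, and the paper's argument is substantially simpler in each.

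\textbf{Forward direction.} You aim for the stronger inclusion $\text{\rm aLim}\subseteq\text{\rm Lwc}$ by taking a disjoint $(y_n)$ in $\sol(T(B_X))$ and norming it by disjoint positive $g_n\in B_{F^\ast}$. The step that does not go through is the passage from $|y_n|\le|Tx_n|$ to a bound supplied by almost limitedness: you get $g_n(|y_n|)\le g_n(|Tx_n|)$, but almost limitedness only gives $|g_n(Tx_n)|\to 0$, not $g_n(|Tx_n|)\to 0$, and splitting $g_n=g_n^+-g_n^-$ does not help since the obstruction is the modulus on $Tx_n$, not on $g_n$. You flag this obstacle yourself without resolving it. The paper sidesteps the issue entirely by using the sequential characterization of \text{\rm aLwc} operators \cite[Prop.\,1]{EAS}: $T\in\text{\rm aLwc}(X,F)$ iff $f_n(Tx_n)\to 0$ for every w-null $(x_n)$ in $X$ and every disjoint $(f_n)$ in $B_{F^\ast}$. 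Then $F=F^a$ makes such $(f_n)$ w$^\ast$-null, so $\|T^\ast f_n\|\to 0$ by almost limitedness, and $|f_n(Tx_n)|\le\|T^\ast f_n\|\,\|x_n\|\to 0$. If you insist on your route, the clean repair is the dual characterization of \text{\rm Lwc} sets \cite[Prop.\,3.6.3]{Mey91} (a bounded $A$ is \text{\rm Lwc} iff every disjoint bounded sequence in $F^\ast$ tends to $0$ uniformly on $A$), which together with $F=F^a$ yields $\text{\rm aLim}\subseteq\text{\rm Lwc}$ without ever entering $\sol(T(B_X))$.

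\textbf{Reverse direction.} Your construction hinges on a lattice embedding $S:\ell^\infty\to F$, and your fallback claim that $F\ne F^a$ forces $F$ to contain a lattice copy of $\ell^\infty$ is false: take $F=c$, which satisfies $c\ne c^a=c_0$ yet is separable (and \cite[Thm.\,4.51]{AlBu} genuinely needs Dedekind completeness). Moreover, the proposition is stated for an arbitrary $X$, so specializing to $X=c_0$ is not legitimate. The paper's argument is again much shorter: compact operators are almost limited, so the hypothesis gives $\text{\rm K}(X,F)\subseteq\text{\rm aLwc}(X,F)$, and \cite[Thm.\,1]{EAS} then yields $F=F^a$. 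Concretely, and without citing \cite{EAS}: for any $0\ne f\in X^\ast$ and any $y_0\in F\setminus F^a$, the rank-one operator $Tx=f(x)y_0$ is compact, hence almost limited, but for $x_0$ with $f(x_0)=1$ the singleton $\{y_0\}=T(\{x_0\})$ is not an \text{\rm Lwc} set, so $T\notin\text{\rm aLwc}(X,F)$.
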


\begin{proof}
If almost limited operators are aLwc, then so are the compact operators, and the result follows
from \cite[Thm.\,1]{EAS}.

For the converse, let $(x_n)$ be a w-null sequence in $X$, and let $(f_n)$ be disjoint in $B_{F^\ast}$. 
We need to show that $f_n(Tx_n)\to 0$. 
Since $F=F^a$ then $f_n\stackrel{\text{\rm w}^\ast}{\to}0$ and so $\|T^\ast f_n\|\to 0$ as  $T\in\text{\rm aLim}(X,F)$. 
That $T\in\text{\rm aLwc}(X,F)$ follows from $|T^\ast f_n(x_n)| \leq\|T^\ast f_n\|\cdot\|x_n\|$,
as $(x_n)$ is bounded.
\end{proof}

\begin{proposition}\label{propmn1}
The following holds.
\begin{enumerate}[{\em (i)}]
\item
If $F$ has weakly sequentially continuous lattice operations and $F=F^a$, then 
$\text{\rm aLim}(E,F)\subseteq \text{\rm DP}(E,F)$ for every $E$.
\item
If $E$ has weakly sequentially continuous lattice operations, then\\
$\text{\rm aLim}_+(E,F)\subseteq \text{\rm DP}(E,F)$ for every $F$.
\end{enumerate}
\end{proposition}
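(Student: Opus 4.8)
The plan is to exploit the defining property of almost limited operators together with weak sequential continuity of the lattice operations, which turns a weakly null sequence into a \emph{disjoint} weakly null sequence after a perturbation, so that the sequence of images under $T^\ast$-type functionals can be forced to norm-null by the almost-limitedness of $T$. Concretely, for part (i), let $(x_n)$ be a weakly null sequence in $E$ and suppose, for contradiction, that $\|Tx_n\|\not\to 0$; after passing to a subsequence we may assume $\|Tx_n\|\ge\varepsilon$ for all $n$. Pick $f_n\in B_{F^\ast}$ with $f_n(Tx_n)\ge\varepsilon$. Since $F=F^a$, weak$^\ast$-null disjoint sequences in $F^\ast$ are uniformly null on $T(B_E)$; the difficulty is that $(f_n)$ need not be disjoint, so the main technical step is a disjointification argument on the dual side, or alternatively, to pass the disjointness to the \emph{pre}image side.

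I expect the cleaner route is the following. First I would use that $F$ has weakly sequentially continuous lattice operations to show $T$ maps weakly null sequences of $E$ to sequences whose images, after the standard Bessaga--Pe\l czy\'nski-type disjointification, are ``almost disjoint'': more precisely, if $y_n:=Tx_n$ is not norm-null, then (using $F=F^a$, i.e. $y_n\in F^a$, and the fact that a non-norm-null bounded sequence in $F^a$ with no norm-convergent subsequence admits a subsequence equivalent to a disjoint sequence up to a small perturbation) we may replace $y_n$ by a disjoint sequence $d_n$ in $F$ with $\|y_n-d_n\|$ small. Because $(x_n)$ is weakly null and the lattice operations on $F$ are weakly sequentially continuous, one gets $y_n\stackrel{\text{\rm w}}{\to}0$, hence $d_n\stackrel{\text{\rm w}}{\to}0$, and a disjoint weakly null sequence in $F^a$ is automatically norm-null by order continuity — contradicting $\|d_n\|\ge\varepsilon/2$. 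So the weak sequential continuity of the lattice operations on $F$ is used precisely to get $Tx_n\stackrel{\text{\rm w}}{\to}0$, and then order continuity of the norm on $F$ (the $F=F^a$ hypothesis) finishes it once disjointness is arranged. In fact, if $Tx_n \stackrel{\text{\rm w}}{\to} 0$ in $F = F^a$, then by \cite[Thm.~4.34]{AlBu} (or the relevant L-weak compactness criterion) $\|Tx_n\|\to 0$ provided $\{Tx_n\}$ is an Lwc set; and $\{Tx_n\}$ is Lwc because $T$ is almost limited into $F=F^a$ — here almost-limitedness of $T(B_E)$ plus $F=F^a$ gives that $T(B_E)$ is an Lwc set (every disjoint sequence in its solid hull is weak$^\ast$-null being disjoint in $F^a$, hence uniformly null by almost-limitedness, hence norm-null). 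That is the crux: \textbf{$F=F^a$ upgrades ``almost limited'' to ``Lwc''}, and an Lwc operator sends weakly null sequences to norm-null sequences (being in particular L-weakly compact, hence completing the argument via semi-compactness).

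For part (ii) the hypothesis is weakened — only $E$ (not $F$) has weakly sequentially continuous lattice operations, $T$ is positive, and $F$ is arbitrary — so I cannot argue through $Tx_n\stackrel{\text{\rm w}}{\to}0$ in $F$. Instead, given a weakly null $(x_n)$ in $E$, I would write $|x_n|\stackrel{\text{\rm w}}{\to}0$ using weak sequential continuity of the lattice operations on $E$, and reduce to positive weakly null sequences via $x_n^{+},x_n^{-}\le |x_n|$ and positivity of $T$ (so $0\le Tx_n^{\pm}\le T|x_n|$). Then for a positive weakly null $(u_n)$ in $E$ it suffices to show $\|Tu_n\|\to 0$. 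Suppose not; choose $f_n\in B_{F^\ast}_+$ with $f_n(Tu_n)\ge\varepsilon$. Now apply a disjointification to $(u_n)$ in $E$: a positive weakly null bounded sequence in $E$ that is not norm-null has a subsequence equivalent to a disjoint sequence $v_n$ with $\|u_n-v_n\|$ small (again the Bessaga--Pe\l czy\'nski dichotomy; the no-$\ell^1$-copy case is handled because a weakly Cauchy-but-not-norm-convergent positive sequence still disjointifies). The disjoint bounded sequence $(v_n)$ in $E$ is mapped by the almost limited operator $T$: since $T$ is almost limited, $T(B_E)$ is almost limited, and $(f_n)$ — while not obviously disjoint — is handled by passing disjointness through: one instead considers the disjoint sequence $(v_n)$ and uses that $\{T v_n\}\subseteq T(B_E')$ is almost limited, so any disjoint weak$^\ast$-null sequence of functionals is uniformly null on it; to extract such functionals one takes a disjointification of $(f_n)$ in $F^\ast$ adapted to the disjoint $(Tv_n)$, which is where the real work lies. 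The main obstacle, and the place I would spend the most care, is exactly this: in part (ii) neither $F=F^a$ nor weak sequential continuity on $F$ is available, so producing a \emph{disjoint weak$^\ast$-null} sequence in $F^\ast$ that still evaluates to something bounded away from $0$ against $(Tv_n)$ requires a genuine disjointification argument on $F^\ast$ (of the type underlying \cite[Thm.~4.36]{AlBu} / the Meyer-Nieberg lemma), using that the $Tv_n$ are themselves disjoint and that $T$ is positive so that $|Tv_n|=Tv_n$. Once such a disjoint $(g_n)$ with $g_n(Tv_n)\ge\varepsilon/2$ is in hand, almost-limitedness of $T(B_E')$ forces $g_n\to 0$ uniformly on it, contradicting $g_n(Tv_n)\ge\varepsilon/2$, and we are done.
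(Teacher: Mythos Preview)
Your proposal has genuine gaps in both parts, and misses the single tool that makes the paper's argument short: the Dodds--Fremlin criterion \cite[Cor.\,2.6]{DoFr}, which says that for a bounded sequence $(y_n)$ in $F$ one has $\|y_n\|\to 0$ if and only if $|y_n|\stackrel{\text{\rm w}}{\to}0$ and $f_n(y_n)\to 0$ for every disjoint norm-bounded $(f_n)$ in $F^\ast_+$.

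For part (i), your claim that ``$F=F^a$ upgrades almost limited to Lwc'' is fine, but the next step fails. The assertion that an Lwc operator sends weakly null sequences to norm-null ones --- equivalently, that $y_n\stackrel{\text{\rm w}}{\to}0$ together with $\{y_n\}$ Lwc forces $\|y_n\|\to 0$ --- is false: the Rademacher functions $(r_n)$ in $L^1[0,1]$ are weakly null and lie in the Lwc order interval $[-\mathbb{1},\mathbb{1}]$, yet $\|r_n\|_1=1$. You also misidentify where the lattice-operations hypothesis on $F$ enters: $Tx_n\stackrel{\text{\rm w}}{\to}0$ is automatic for any bounded $T$ once $x_n\stackrel{\text{\rm w}}{\to}0$; what weak sequential continuity of the lattice operations actually buys is $|Tx_n|\stackrel{\text{\rm w}}{\to}0$, which is precisely the first half of Dodds--Fremlin. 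The paper then checks the second half directly: a disjoint bounded $(f_n)$ in $F^\ast_+$ is $\text{\rm w}^\ast$-null because $F=F^a$, so almost limitedness gives $\|T^\ast f_n\|\to 0$, and $|f_n(Tx_n)|\le\|T^\ast f_n\|\,\|x_n\|\to 0$. No Bessaga--Pe\l czy\'nski disjointification is needed anywhere.

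For part (ii), the paper again runs Dodds--Fremlin, and the positivity of $T$ replaces your entire disjointification programme on the domain side in one line: weak sequential continuity of the lattice operations on $E$ gives $|x_n|\stackrel{\text{\rm w}}{\to}0$, and then $|Tx_n|\le T|x_n|$ yields $|Tx_n|\stackrel{\text{\rm w}}{\to}0$ directly; the second Dodds--Fremlin condition is handled via $\|T^\ast f_n\|\to 0$ for disjoint $\text{\rm w}^\ast$-null $(f_n)$. Your route through disjointifying in $E$ and then again in $F^\ast$ is much heavier, and --- as you yourself note --- the dual-side step (manufacturing a disjoint $\text{\rm w}^\ast$-null $(g_n)$ in $F^\ast$ with $g_n(Tv_n)$ bounded below) is left as an unresolved ``main obstacle''; you give no mechanism to carry it out, and the standard disjointification lemmas do not preserve $\text{\rm w}^\ast$-nullity in general, so this is a real gap rather than a routine detail.
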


\begin{proof}
(i) \ 
Let $T\in\text{\rm aLim}(E,F)$. 
Take $x_n \stackrel{\text{\rm w}}{\to}0$ in $E$. We need to check $\|Tx_n\|\to 0$.
By~\cite[Cor.2.6]{DoFr}, 
we need to show $|Tx_n| \stackrel{\text{\rm w}}{\to}0$ and $f_n(Tx_n)\to 0$ for each
disjoint bounded $(f_n)$ in $F^\ast_+$.
Since $Tx_n \stackrel{\text{\rm w}}{\to}0$, we have  $|Tx_n| \stackrel{\text{\rm w}}{\to}0$. 
Let $(f_n)$ be disjoint bounded in $F^\ast_+$. 
Since $T\in\text{\rm aLim}(E,F)$ then $ \|T^\ast f_n\|\to 0$.
Since $x_n \stackrel{\text{\rm w}}{\to}0$, for some $M$, $\|x_n\|\leq M$ for all $n$.
It follows from
$$
   |f_n(Tx_n)|\leq  \|T^\ast f_n\| \cdot \|x_n\| \leq M\|T^\ast f_n\|\to 0
$$ 
that $f_n(Tx_n)\to 0$, as desired. 
\medskip

(ii) \ 
Let $T\in\text{\rm aLim}_+(E,F)$ and $x_n \stackrel{\text{\rm w}}{\to}0$ in $E$. 
Then $\|x_n\|\leq M$ for some $M$ and all $n$.
Since $E$  has weakly sequentially continuous lattice operations, $|x_n| \stackrel{\text{\rm w}}{\to}0$. 
It follows from $|f(|Tx_n|)|\leq  |f|(T|x_n|)=(T^\ast |f|)|x_n|\to 0$ for every $f\in F^\ast$
that $|Tx_n| \stackrel{\text{\rm w}}{\to}0$. 
Let $f_n \stackrel{{\rm w}^\ast}{\to}0$ be disjoint bounded in $F^\ast_+$. 
Since $T\in\text{\rm aLim}(E,F)$, we have $ \|T^\ast f_n\|\to 0$.
Since $T\geq 0$, 
$$
 |f_n(Tx_n)|\leq  f_n(T|x_n|)=(T^\ast f_n)|x_n| \leq \|T^\ast f_n\|\|x_n\| 
 \leq M\|T^\ast f_n\|\to 0
$$ 
implies $f_n(Tx_n)\to 0$. Combining this with $|Tx_n| \stackrel{\text{\rm w}}{\to}0$,
we obtain $\|Tx_n\|\to 0$ by~\cite[Cor.2.6]{DoFr}. 
Thus, $T\in \text{\rm DP}(E,F)$.
\end{proof}

\subsection{}
The following gives necessary conditions for positive wDP operators to be limited.   

\begin{proposition}\label{propmn4}
If $\text{\rm wDP}_+(E,F)\subseteq \text{\rm Lim}(E,F)$
then one of the following holds.
\begin{enumerate}[{\em (i)}]
\item
$E^\ast$ is a \text{\rm KB}-space.
\item
$F^\ast$ has the Schur property.
\end{enumerate}
\end{proposition}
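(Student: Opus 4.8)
The plan is to prove the contrapositive. Assume $E^\ast$ is not a KB-space and $F^\ast\notin\text{\rm (SP)}$; I shall construct a positive operator $T\colon E\to F$ that lies in $\text{\rm DP}(E,F)\subseteq\text{\rm wDP}(E,F)$ but not in $\text{\rm Lim}(E,F)$, contradicting $\text{\rm wDP}_+(E,F)\subseteq\text{\rm Lim}(E,F)$. The two failures furnish the ingredients exactly as in the proof of Theorem~\ref{sufficient conditions for aLwc subseteq Lwc}: the norm of $E^\ast$ is not order continuous, so there are a disjoint $(x^\ast_n)$ in $E^\ast_+$ and $x^\ast\in E^\ast_+$ with $0\le x^\ast_n\le x^\ast$ and $\|x^\ast_n\|=1$; and, $F^\ast$ failing the Schur property, there is a weakly null $(f_n)$ in $F^\ast$ with $\|f_n\|=1$. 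From the elementary bound $\sup\{|f(y)|:y\in F_+,\ \|y\|\le1\}\ge\tfrac12\|f\|$ (a consequence of the Riesz--Kantorovich formulas for $f^+$ and $f^-$) I pick $y_n\in F_+$ with $\|y_n\|\le1$ and $|f_n(y_n)|\ge\tfrac13$.

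Now build the factorization. Let $R\colon E\to\ell^1$, $Rx=(x^\ast_n(x))_n$; this is well defined, positive and bounded, since $\sum_n|x^\ast_n(x)|\le\sum_n x^\ast_n(|x|)\le x^\ast(|x|)\le\|x^\ast\|\,\|x\|$ because $\bigvee_{n\le N}x^\ast_n\le x^\ast$ for every $N$. Let $S\colon\ell^1\to F$, $Sa=\sum_n a_ny_n$; it is positive and bounded as $\sup_n\|y_n\|\le1$. Put $T:=SR\colon E\to F$: it is positive, and since it factors through the Schur space $\ell^1$ it maps weakly null sequences of $E$ to norm null sequences of $F$, so $T\in\text{\rm DP}(E,F)\subseteq\text{\rm wDP}(E,F)$, hence $T\in\text{\rm wDP}_+(E,F)$.

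It remains to see $T\notin\text{\rm Lim}(E,F)$, i.e. that the weak$^\ast$-null sequence $(f_n)$ is not uniformly null on $T(B_E)$. Since $\sup_{x\in B_E}|f_n(Tx)|=\|T^\ast f_n\|_{E^\ast}$, and $T^\ast=R^\ast S^\ast$ with $S^\ast f_n=(f_n(y_k))_k\in\ell^\infty$ of sup-norm $\ge|f_n(y_n)|\ge\tfrac13$, it suffices to prove $\|R^\ast a\|_{E^\ast}\ge\|a\|_\infty$ for every $a\in\ell^\infty$; then $\|T^\ast f_n\|_{E^\ast}\ge\tfrac13$ for all $n$, which is the sought contradiction. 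For this lower bound use that $E^\ast$ is Dedekind complete: $R^\ast a$ is the weak$^\ast$ limit of the partial sums $\sum_{k=1}^N a_kx^\ast_k$, and for fixed $j$ the tail $R^\ast a-a_jx^\ast_j$ (the weak$^\ast$ limit of $\sum_{k\le N,\,k\ne j}a_kx^\ast_k$) has modulus $\le\|a\|_\infty\,w$, where $w:=\sup_N\sum_{k\le N,\,k\ne j}x^\ast_k\le x^\ast$ is disjoint from $x^\ast_j$. Hence, writing $P_j$ for the band projection of $E^\ast$ onto the band generated by $x^\ast_j$, we get $P_j(R^\ast a)=a_jx^\ast_j$, so $\|R^\ast a\|\ge\|P_j(R^\ast a)\|=|a_j|$; taking the supremum over $j$ yields $\|R^\ast a\|\ge\|a\|_\infty$.

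The step needing most care is this last one: because the norm of $E^\ast$ is not order continuous, $\sum_k a_kx^\ast_k$ converges only weakly$^\ast$, so $P_j$ cannot be applied termwise. The point is instead to bound the modulus of the tail $R^\ast a-a_jx^\ast_j$ by an element of the band of functionals disjoint from $x^\ast_j$ (which is stable under the suprema involved, and on which $P_j$ vanishes). Everything else -- positivity and boundedness of $R$ and $S$, the Dunford--Pettis property of $T$ via the Schur property of $\ell^1$, and the small norming lemma for $f^+,f^-$ -- is routine.
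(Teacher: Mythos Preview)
Your proof is correct and follows the same overall strategy as the paper: argue by contrapositive, factor a positive operator $E\to\ell^1\to F$, use properties of $\ell^1$ to get (weak) Dunford--Pettis, then show the composite is not limited by exhibiting a weak$^\ast$-null $(f_n)$ with $\|T^\ast f_n\|$ bounded away from $0$. The second leg $S\colon\ell^1\to F$ is identical in both.

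The genuine difference lies in the first leg. The paper invokes the structural theorem that when $E^\ast$ fails to be a KB-space, $E$ contains a lattice copy of $\ell^1$ which is the range of a \emph{positive projection} $P\colon E\to\ell^1$ (\cite[Thm.\,4.69]{AlBu}, \cite[2.3.11]{Mey91}); surjectivity of $P$ then gives $\delta B_{\ell^1}\subseteq P(B_E)$ by open mapping, and the lower bound $\|T^\ast f_n\|\ge\tfrac{\delta}{2}$ drops out immediately. You instead build $R\colon E\to\ell^1$ by hand from a disjoint order-bounded sequence in $E^\ast$ (exactly the construction used in the paper's Theorem~\ref{sufficient conditions for aLwc subseteq Lwc}), and compensate for $R$ not being onto by proving directly that $R^\ast\colon\ell^\infty\to E^\ast$ is isometric from below, via band projections in the Dedekind complete dual. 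That argument is correct: order intervals in $E^\ast$ are weak$^\ast$-closed, so the tail $R^\ast a-a_jx^\ast_j$ indeed lies in $[-\|a\|_\infty w,\|a\|_\infty w]$ with $w\perp x^\ast_j$, whence $P_j(R^\ast a)=a_jx^\ast_j$. Your route is more self-contained (no appeal to the complemented-$\ell^1$ theorem) and in fact yields the slightly stronger conclusion $T\in\text{\rm DP}_+(E,F)$ via the Schur property of $\ell^1$; the paper's route is shorter once the structural result is quoted.
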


\begin{proof}
Assume neither $E^\ast$  is a \text{\rm KB}-space, nor $F^\ast\in \text{\rm (SP)}$.
Then, by
\cite[Thm.\,4.69]{AlBu}, 
$E$ contains a sublattice isomorphic to $\ell^1$, 
and there is a positive projection $P$ from $E$ onto $\ell^1$ (see \cite[2.3.11]{Mey91}).
As $F^\ast\not\in\text{\rm (SP)}$, there exists a w-null (hence w$^\ast$-null) sequence
$(f_n)$ in $F^\ast$ such that $\|f_n\|=1$ for all $n$. 
For each $n$, take $y_n\in(B_F)_+$ with $|f_n(y_n)|\geq\frac{1}{2}$.
Let $S:\ell^1 \to F$ be defined as $S(\alpha_n)=\sum_{n=1}^\infty \alpha_n y_n$
and $T=S\circ P$. 

As $\ell^1 \in \text{\rm (DPP)}$,  $T\in\text{\rm wDP}_+(E,F)$ (cf., \cite[Exercise~12 p.\,356]{AlBu}).

Suppose $T\in \text{\rm Lim}(E,F)$. 
Since $P$ is onto, 
there exists some $\delta >0$ with $\delta B_{\ell^1} \subseteq P(B_E)$. Thus
$$
\|T^\ast f_n\| = \sup\limits_{x\in B_E}|P^\ast S^\ast f_n(x)| 
= \sup\limits_{x\in B_E}|f_n(SPx)|\geq
$$
$$
  \sup\limits_{z\in \delta B_{\ell^1}}|f_n( Sz)|\geq  \delta|f_n (S e_n)|
   \geq  \delta|f_n(y_n)|\geq  \frac{\delta}{2}  \quad\quad (\forall n \in{\mathbb N}),
$$
where $(e)_n$ are the basis vectors of $\ell^1$.
Since $f_n\stackrel{\text{\rm w}^\ast}{\to}0$, the operator $T$ is not limited. A contradiction.
\end{proof}

\section{Order \text{\rm L(M)wc} operators}

Order \text{\rm L(M)wc} operators were introduced recently in \cite{BLM20,BLM21}.

\subsection{}

A rank one bounded operator need not to be \text{\rm oLwc}. 
Consider $T:\ell^2\to\ell^\infty$ defined by
$Ta=(\sum\limits_{k=1}^\infty a_k){\mathbb 1}_{\mathbb N}$.
Then $Te_n={\mathbb 1}_{\mathbb N}$ and, as ${\mathbb 1}_{\mathbb N}\not\in(\ell^\infty)^a$, 
then $T\not\in \text{\rm oLwc}(\ell^2,\ell^\infty)$. 

\medskip
\noindent
The following result is an improvement of \cite[Thm.2.5]{BLM21}:

\begin{proposition}\label{regular operators are oMwc}
The following conditions are equivalent.
\begin{enumerate}[\em (i)]
\item 
$E^\ast $ is a $\text{\rm KB}$-space. 
\item 
For every $F$, ${\rm L}_{ob}(E,F)\subseteq\text{\rm oMwc}(E,F)$.
\item 
For every $F$, $\text{\rm L}_r(E,F)\subseteq\text{\rm oMwc}(E,F)$.
\item 
For every $F$, $\text{\rm K}(E,F)\subseteq\text{\rm oMwc}(E,F)$.
\item 
Each rank one bounded operator $T:E\to E$ is \text{\rm oMwc}.
\end{enumerate}
\end{proposition}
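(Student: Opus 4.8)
The natural strategy is to prove the cycle (i) $\Rightarrow$ (ii) $\Rightarrow$ (iii) $\Rightarrow$ (iv) $\Rightarrow$ (v) $\Rightarrow$ (i). The implications (ii) $\Rightarrow$ (iii) $\Rightarrow$ (iv) $\Rightarrow$ (v) are essentially free: regular operators are order bounded, compact operators are regular (into the Banach lattice $F$, or at least one reduces the compact case to the finite-rank/regular case by density since $\text{\rm oMwc}(E,F)$ is norm-closed — here I would note that $\text{\rm oMwc}$ operators form a closed subspace, which follows by a routine $\varepsilon/3$ argument from the defining condition $f_n(Tx_n)\to 0$), and a rank-one operator $x\mapsto \varphi(x)y$ is visibly regular. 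So the real content is (i) $\Rightarrow$ (ii) and (v) $\Rightarrow$ (i).

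For (i) $\Rightarrow$ (ii): let $T\in\text{\rm L}_{ob}(E,F)$, take a disjoint bounded sequence $(x_n)$ in $E$ and an order bounded sequence $(f_n)$ in $F^\ast$, say $|f_n|\le g$ for all $n$. I want $f_n(Tx_n)\to 0$. Since $E^\ast$ is a KB-space, its norm is order continuous, so by \cite[2.4.14]{Mey91} the disjoint bounded sequence $(x_n)$ is weakly null in $E$. Now $|f_n(Tx_n)| \le |f_n|(|Tx_n|) \le g(|Tx_n|)$; writing $|T|$ for the modulus (which exists since $F$ is Dedekind complete — note $\text{\rm L}_{ob}(E,F)=\text{\rm L}_r(E,F)$ in this case, so WLOG $T$ is regular), one gets $|Tx_n|\le |T||x_n|$ and hence $|f_n(Tx_n)|\le (|T|^\ast g)(|x_n|)$. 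The obstacle is that $|x_n|$ need not be weakly null even though $x_n$ is. The fix is the standard Dodds–Fremlin type observation: since $E^\ast$ has order continuous norm and $(|x_n|)$ is disjoint and bounded, for the fixed positive functional $h:=|T|^\ast g\in E^\ast_+$ we have $h(|x_n|)\to 0$ — this is exactly the statement that a positive functional is order continuous when the dual norm is order continuous, applied via the fact that disjoint bounded sequences are "almost order null" against order continuous functionals (cf. the argument that $E^{\sim}_n$ separates points combined with \cite[2.4.14]{Mey91}, or directly Meyer-Nieberg). That yields $f_n(Tx_n)\to 0$.

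For (v) $\Rightarrow$ (i): argue by contraposition. If $E^\ast$ is not a KB-space, then its norm is not order continuous, so there is a disjoint sequence $(x^\ast_n)$ and $x^\ast\in E^\ast_+$ with $0\le x^\ast_n\le x^\ast$ and $\|x^\ast_n\|=1$; equivalently (dualizing, as in the proof of Theorem \ref{sufficient conditions for aLwc subseteq Lwc}) there is a disjoint bounded $(x_n)$ in $B_E$ and a functional witnessing non-order-continuity. Pick $\varphi\in E^\ast$ and $u\in E$ realizing a rank-one operator $T = \varphi\otimes u: E\to E$, $Tx=\varphi(x)u$, chosen so that $\varphi$ "detects" the bad disjoint sequence: take $f_n := x^\ast_n \in [0,x^\ast]\subseteq E^\ast$ (order bounded), and a disjoint bounded $(y_n)$ in $E$ with $x^\ast_n(y_n)\ge \tfrac12$, then set $T:= \varphi\otimes u$ with a single fixed $\varphi=x^\ast$... — here the cleanest route is to mimic the constructions in Theorems \ref{necessary conditions for aMwc to be Mwc}–\ref{sufficient conditions for aLwc subseteq Lwc}: build $T$ rank one with $Tx=x^\ast(x)\,u$ for a suitable unit vector $u$, and exploit that $|x^\ast(x_n)|\not\to 0$ along a bounded disjoint $(x_n)$ together with an order bounded $(f_n)$ in $E^\ast$ with $|f_n(u)|$ bounded below, to get $f_n(Tx_n)=x^\ast(x_n)f_n(u)\not\to 0$. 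The main obstacle is choosing $u$ and $(f_n)$ so that $(f_n)$ is genuinely order bounded in $E^\ast$ while $\inf_n|f_n(u)|>0$; this is where one uses that non-KB-ness of $E^\ast$ forces $E$ to contain a suitable copy of $c_0$ or $\ell^\infty$-like structure, or more simply one takes $u$ inside the disjoint sequence's band so that evaluation against the order-bounded $(x^\ast_n)$ stays bounded below — I would phrase this by choosing the rank-one operator $T:E\to E$ with range in the closed sublattice generated by $(x_n)$ and using $f_n:=x^\ast_n$. This contradicts (v), completing the cycle.
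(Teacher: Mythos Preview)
Your cycle has a genuine gap at (iii) $\Rightarrow$ (iv). Compact operators between Banach lattices are \emph{not} in general regular, and they are not in general norm-limits of finite-rank operators either (that would require an approximation property you have not justified). The paper sidesteps this entirely: it never proves (iii) $\Rightarrow$ (iv) directly, but instead shows (iii) $\Rightarrow$ (i) (using the identity operator $I_E$ and a constant sequence $f_n\equiv f$ with $f(x_n)\ge 1$ along a bounded disjoint $(x_n)$, which exists by the contrapositive of \cite[2.4.14]{Mey91}) and then (i) $\Rightarrow$ (iv) via duality: for $T$ compact, $T^\ast$ is compact, so $T^\ast[0,f]$ is relatively compact, hence almost order bounded, hence \text{\rm Lwc} in the order-continuous $E^\ast$; thus $T^\ast\in\text{\rm oLwc}$ and $T\in\text{\rm oMwc}$ by \cite[Thm.2.3(i)]{BLM21}.

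Your (i) $\Rightarrow$ (ii) argument also has a gap: you invoke $|T|$, which need not exist unless $F$ is Dedekind complete. This is easily repaired by working with $|T^\ast|$ instead (which always exists, $E^\ast$ being Dedekind complete): from $|f_n(Tx_n)|=|(T^\ast f_n)(x_n)|\le |T^\ast f_n|(|x_n|)\le (|T^\ast|g)(|x_n|)$ and $|x_n|\stackrel{\text{\rm w}}{\to}0$ one concludes. The paper simply cites (i) $\Leftrightarrow$ (ii) from \cite[Thm.2.5]{BLM21}.

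For (v) $\Rightarrow$ (i) your underlying idea is correct and in fact simpler than the paper's, but your write-up buries it under a nonexistent obstacle. If $E^\ast$ is not a KB-space, \cite[2.4.14]{Mey91} gives a disjoint bounded $(x_n)$ in $E$ and $\varphi\in E^\ast$ with $\varphi(x_n)\ge 1$. Pick any $u\ne 0$ and $f\in E^\ast$ with $f(u)=1$, set $T(x)=\varphi(x)u$, and take the \emph{constant} sequence $f_n\equiv f$ (trivially order bounded). Then $f_n(Tx_n)=\varphi(x_n)\ge 1$, so $T\notin\text{\rm oMwc}(E)$. There is no difficulty in making $(f_n)$ order bounded with $\inf_n|f_n(u)|>0$; just take it constant. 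The paper instead argues by duality: with $g\in E^\ast_+\setminus(E^\ast)^a$ and $T(x)=g(x)y_0$, one has $T^\ast f=g\notin(E^\ast)^a$, so $T^\ast[0,f]$ is not an \text{\rm Lwc} set, whence $T^\ast\notin\text{\rm oLwc}$ and $T\notin\text{\rm oMwc}$.
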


\begin{proof} 
(i) $\Longleftrightarrow$ (ii) is in \cite[Thm.2.5]{BLM21} 
and the implication (ii) $\Longrightarrow$ (iii) is trivial. 

\medskip
(iii) $\Longrightarrow$ (i). If not, there is a disjoint bounded $(x_n)$ in $E$ 
and $f\in E^\ast $ such that $f(x_n)\ge 1$ for all $n\in{\mathbb N}$
by~\cite[2.4.14]{Mey91}. 
The identity $T=I: E\to E$ is \text{\rm oMwc} by $(iii)$.
However, for $(f_n)$ in $F^\ast $ with 
$f_n\equiv f$, $f_n(Tx_n)=f(x_n)\ge 1$, and $f_n(Tx_n)\not\to 0$, 
violating 
\cite[Thm.2.2]{BLM21}. 
The obtained contradiction proves the implication.

\medskip
(i) $\Longrightarrow$ (iv).
Let $T\in\text{\rm K}(E,F)$ and let $x\in F^\ast_+$. Since $T^\ast \in\text{\rm K}(F^\ast ,E^\ast )$, 
$T^\ast [0,x]$ is relatively compact and almost order bounded. As the norm in $E^\ast $
is \text{\rm o}-continuous, $T^\ast [0,x]$ is an \text{\rm Lwc} subset of $E^\ast $ 
by~\cite[Prop.3.6.2]{Mey91}.
This yields $T^\ast \in\text{\rm oLwc}(F^\ast ,E^\ast )$, and so $T\in\text{\rm oMwc}(E,F)$,
by 
\cite[Thm.2.3\,(i)]{BLM21}.

\medskip
(iv) $\Longrightarrow$ (v). It is obvious.

\medskip
(v) $\Longrightarrow$ (i).
If the norm in $E^\ast $ is not \text{\rm o}-continuous, 
take $g\in E^\ast_+\setminus(E^\ast )^a$. 
Let $T$ be an operator defined by $T(x)=g(x)y_0$ with $y_0\in E_+\setminus\{0\}$.
Then $T\in\text{\rm oMwc}(E)$. Choose $f\in E^\ast $ with $f(y_0)=1$.
Since $T^\ast f=f(y_0)g=g\not\in(E^\ast)^a$, the set $T^\ast [0,f]\subseteq E^\ast$ is not \text{\rm Lwc}  
as each \text{\rm Lwc} subset of $E^\ast $ must lie in $(E^\ast)^a$. 
Therefore $T^\ast \not\in\text{\rm oLwc}(E^\ast)$. This yields $T\not\in\text{\rm oMwc}(E)$ 
by \cite[Thm.2.3\,(i)]{BLM21}. 
A contradiction.
\end{proof}

\begin{corollary}\label{rank one op1}
Let  $\dim(E)\ge 1$ and $\dim(F)\ge 1$. Then$:$ 
\begin{enumerate}[\em (i)]
\item $E^\ast$ is a $\text{\rm KB}$-space $\Longleftrightarrow$ 
every rank one operator $T:E\to F$ is \text{\rm oMwc}$;$
\item $F=F^a$ $\Longleftrightarrow$ 
every rank one operator $T:E\to F$ is \text{\rm oLwc}.
\end{enumerate}
\end{corollary}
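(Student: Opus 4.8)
The plan is to derive both equivalences from Proposition~\ref{regular operators are oMwc}, using only two elementary facts already recorded in the text: rank one operators are compact, and every $\text{\rm Lwc}$ subset of a Banach lattice $G$ is contained in $G^a$ (the remark following Definition~\ref{limited subset}). The two $(\Rightarrow)$ implications are one-line reductions; the substance is in the $(\Leftarrow)$ implications, where I would reconstruct, with the prescribed target (resp.\ domain), the single rank one operator used in the proof of Proposition~\ref{regular operators are oMwc}~(v)$\Rightarrow$(i).

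\smallskip\noindent
\emph{Part (i).} $(\Rightarrow)$: if $E^\ast$ is a $\text{\rm KB}$-space, then $\text{\rm K}(E,F)\subseteq\text{\rm oMwc}(E,F)$ by Proposition~\ref{regular operators are oMwc} ((i)$\Rightarrow$(iv)), and every rank one operator is compact. $(\Leftarrow)$: suppose $E^\ast$ is not a $\text{\rm KB}$-space. Then its norm is not order continuous, so there is $g\in E^\ast_+\setminus(E^\ast)^a$; in particular $g\neq 0$. Using $\dim(F)\geq 1$, fix $y_0\in F_+\setminus\{0\}$ and $\psi\in F^\ast_+$ with $\psi(y_0)\neq 0$, and put $Tx:=g(x)y_0$; this is a rank one operator $E\to F$. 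Its adjoint is $T^\ast\phi=\phi(y_0)g$, so $T^\ast\psi=\psi(y_0)g\in T^\ast[0,\psi]$ while $\psi(y_0)g\notin(E^\ast)^a$ (a nonzero multiple of $g$). Hence $T^\ast[0,\psi]$ is not an $\text{\rm Lwc}$ subset of $E^\ast$, so $T^\ast\notin\text{\rm oLwc}(F^\ast,E^\ast)$, and therefore $T\notin\text{\rm oMwc}(E,F)$ by the adjoint characterization \cite[Thm.2.3(i)]{BLM21} already used in the proof of Proposition~\ref{regular operators are oMwc}.

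\smallskip\noindent
\emph{Part (ii).} $(\Rightarrow)$: assume $F=F^a$, i.e.\ the norm of $F$ is order continuous, and let $Tx=g(x)y_0$ be rank one. Any order bounded $A\subseteq E$ lies in some order interval $[a,b]$, on which $g$ is bounded, say $|g(x)|\leq c$ for $x\in[a,b]$; then $T(A)\subseteq\{t y_0:|t|\leq c\}\subseteq[-c|y_0|,c|y_0|]$. By \cite[Prop.3.6.2]{Mey91} this order interval is an $\text{\rm Lwc}$ set, hence so is its subset $T(A)$; thus $T\in\text{\rm oLwc}(E,F)$. $(\Leftarrow)$: suppose $F\neq F^a$ and pick $y_0\in F_+\setminus F^a$, so $y_0\neq 0$. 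Using $\dim(E)\geq 1$, pick $g\in E^\ast$ and $x_0\in E$ with $c:=g(x_0)\neq 0$, and put $Tx:=g(x)y_0$. Since $x_0\in[-|x_0|,|x_0|]$ we have $cy_0=Tx_0\in T([-|x_0|,|x_0|])$ and $cy_0\notin F^a$; thus $T([-|x_0|,|x_0|])$ is not contained in $F^a$, hence is not an $\text{\rm Lwc}$ set, and $T\notin\text{\rm oLwc}(E,F)$.

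\smallskip\noindent
The only real subtlety is that in the $(\Leftarrow)$ directions the hypothesis supplies rank one operators only between the two \emph{fixed} lattices $E$ and $F$, not from $E$ to $E$, so one cannot merely quote Proposition~\ref{regular operators are oMwc}~(v) but must rerun its construction here; this is precisely where the dimension assumptions are spent — $\dim(F)\geq 1$ furnishes the nonzero $y_0$ in (i), and $\dim(E)\geq 1$ furnishes $g$ and $x_0$ with $g(x_0)\neq 0$ in (ii). Everything else — the adjoint dictionary $\text{\rm oMwc}\leftrightarrow\text{\rm oLwc}$ and the bookkeeping with images of order intervals — is routine.
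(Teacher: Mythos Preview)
Your proof is correct and follows essentially the same approach as the paper. The only cosmetic differences are that in (ii)\,($\Leftarrow$) you invoke the fact that every \text{\rm Lwc} set lies in $F^a$ (so an image containing a point outside $F^a$ cannot be \text{\rm Lwc}), whereas the paper argues directly from the definition by exhibiting a non-norm-null disjoint sequence $(y_n)$ inside $[0,y]=\text{\rm sol}(T[0,x])$; and in (i) you spell out the adaptation from $E\to E$ to $E\to F$ that the paper leaves implicit when it says ``It is a part of Proposition~\ref{regular operators are oMwc}''.
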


\begin{proof}
(i) It is a part of Proposition \ref{regular operators are oMwc}.

\medskip
(ii) The implication ($\Longrightarrow$) is obvious.

\medskip
(ii) ($\Longleftarrow$) If not, there exist $y\in F_+$ and a disjoint sequence $(y_n)$ in $[0,y]$,
with $\|y_n\|\not\to 0$. Choose $x\in E_+$ and $f\in E^\ast_+$ such that $f(x)=1$, and define 
$T$ as $Tz=f(z)y$. 
Then $Tx=y$, $(y_n)$ is disjoint in $\text{sol}\,(T[0,x])$, and $\|y_n\|\not\to 0$, violating that $T$ 
must be \text{\rm oLwc}. A contradiction.
\end{proof}

\subsection{}
The following result shows that restriction a regular operator on a principal ideal is \text{\rm oMwc}.

\begin{proposition}\label{T is oMwc}
Let $T\in\text{\rm L}_r(E,F)$ and  $x\in E_+$. 
Then the restriction $T:(I_x,\|.\|_x)\to F$ is \text{\rm oMwc}. 
\end{proposition}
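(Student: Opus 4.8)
The plan is to verify the defining condition of \text{\rm oMwc} operators directly: for every order bounded sequence $(f_n)$ in $F^\ast$ and every disjoint $\|\cdot\|_x$-bounded sequence $(y_n)$ in the principal ideal $I_x$, I must show $f_n(Ty_n)\to 0$. The key observation is that the norm $\|\cdot\|_x$ on $I_x$ turns $I_x$ into an AM-space with unit $x$, so a $\|\cdot\|_x$-bounded disjoint sequence $(y_n)$ satisfies $|y_n|\le \lambda x$ for a common $\lambda>0$, and the $(y_n)$ are disjoint in $E$ as well. Thus $(y_n)$ is an order bounded disjoint sequence in $E$, lying in $[-\lambda x,\lambda x]$.

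First I would reduce to the case $T\ge 0$: writing $T=T_1-T_2$ with $T_1,T_2\ge 0$ (using $T\in\text{\rm L}_r(E,F)$), it suffices to treat each $T_i$ separately, since $f_n(Ty_n)=f_n(T_1y_n)-f_n(T_2y_n)$. So assume $T\ge 0$. Next, since $(f_n)$ is order bounded in $F^\ast$, say $|f_n|\le g$ for some $g\in F^\ast_+$, I can dominate
\[
|f_n(Ty_n)|\le |f_n|(T|y_n|)\le g(T|y_n|)=(T^\ast g)(|y_n|).
\]
Now set $h:=T^\ast g\in E^\ast_+$. The sequence $(|y_n|)$ is a disjoint sequence in the order interval $[0,\lambda x]$ of $E$. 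The crucial step is then to show $h(|y_n|)\to 0$, i.e. that every positive functional is uniformly null on a disjoint sequence inside a fixed order interval of a Banach lattice. This is a standard fact: disjoint sequences in $[0,u]$ are order-null, and more precisely, for $h\in E^\ast_+$ and disjoint $(u_n)$ in $[0,u]$ one has $\sum_n h(u_n)\le h(u)<\infty$, so $h(u_n)\to 0$. Hence $h(|y_n|)\to 0$ and therefore $f_n(Ty_n)\to 0$, as required.

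The main obstacle, and the only point needing care, is the passage from "$(y_n)$ is $\|\cdot\|_x$-bounded and disjoint in $I_x$" to "$(y_n)$ is order bounded by a multiple of $x$ and disjoint in $E$." The order boundedness is immediate from the description of $\|\cdot\|_x$ as the AM-norm generated by the order interval $[-x,x]$, namely $\|y\|_x=\inf\{\mu>0:|y|\le\mu x\}$; disjointness in $I_x$ coincides with disjointness in $E$ because $I_x$ is an ideal. Once this is in place, the summability estimate $\sum_n h(u_n)\le h(\lambda x)$ for disjoint $(u_n)$ in $[0,\lambda x]$ closes the argument; this can be cited from \cite{AlBu} or \cite{Mey91} (it is essentially the statement that order intervals are \text{\rm Lwc} when the norm is order continuous, but here we only need the weaker scalar version, which holds unconditionally). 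I would also remark that this recovers, and localizes, the implication $E^\ast$ a KB-space $\Rightarrow\text{\rm L}_r(E,F)\subseteq\text{\rm oMwc}(E,F)$ from Proposition~\ref{regular operators are oMwc}, since on all of $E$ the analogous estimate requires the disjoint bounded sequence to be order bounded, which fails in general but holds after restricting to $I_x$.
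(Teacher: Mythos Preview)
Your proof is correct and follows the same architecture as the paper's: reduce to $T\ge 0$, dominate $|f_n(Ty_n)|\le g(T|y_n|)=(T^\ast g)(|y_n|)$ with $g\ge|f_n|$, and then show $(T^\ast g)(|y_n|)\to 0$. The one genuine difference lies in this last step. The paper argues via weak topology: since $(I_x,\|\cdot\|_x)$ is an AM-space with unit, disjoint order bounded sequences are weakly null there, lattice operations are weakly sequentially continuous, so $|y_n|\to 0$ in $\sigma(I_x,I_x^\ast)$, and then $T^\ast g|_{I_x}\in I_x^\ast$ gives the conclusion. You instead use the direct summability estimate $\sum_n (T^\ast g)(|y_n|)\le (T^\ast g)(\lambda x)<\infty$, which follows immediately from disjointness of $(|y_n|)$ in $[0,\lambda x]$. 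Your route is more elementary and self-contained (no appeal to weak sequential continuity of lattice operations or to the AM-structure beyond the identification $\|\cdot\|_x$-bounded $=$ order bounded), while the paper's route emphasizes the role of the weak topology on $I_x$; both reach the same destination with the same overall shape.
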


\begin{proof}
We may suppose $T\ge 0$. 
The principal order ideal $I_x$ of $E$ is a Banach lattice equipped 
with the norm $\|y\|_x=\inf\{\lambda: |y|\le\lambda x\}$.
Let $(x_n)$ be a disjoint order bounded sequence in $I_x$. 
Then $x_n \to 0$ in $\sigma(I_x,I^\ast_x)$. 
Since the lattice operations are sequentially 
weakly continuous, 
we have $|x_n| \to 0$ in $\sigma(I_x,I^\ast_x)$. 
Since $T$ is positive, $T: (I_x,\|.\|_x)\to F$ is norm continuous, and hence 
$T$ is also continuous 
when the both spaces are equipped with their weak topologies. 
Let  $(f_n)$ be order bounded in $F^\ast $, 
say $|f_n|\le f\in F^\ast $. Then 
$$
   |f_n(Tx_n)|\le |f_n|(|Tx_n|)\le |f_n|(T|x_n|)\le f(T|x_n|).
$$
Since $f_n(Tx_n)$ can be made arbitrary small, $T\in \text{\rm oMwc}(I_x,F)$.
\end{proof}

\subsection{}
It follows directly from the definitions:   
$$
   \text{\rm Lwc}(E,F)\subseteq\text{\rm oLwc}(E,F)\cap\text{\rm aLwc}(E,F);
$$
$$
   \text{\rm Mwc}(E,F)\subseteq\text{oMwc}(E,F)\cap\text{aMwc}(E,F).
$$
$$
   I_{c_0}\in\text{\rm oLwc}(c_0)\setminus\text{\rm aLwc}(c_0)\ \text{\rm and}\ 
   I_{\ell^1}\in\text{\rm oMwc}(\ell^1)\setminus\text{\rm aMwc}(\ell^1).
$$
It can be easily shown that 
$I_{\ell^2}\in\text{\rm oMwc}(\ell^2)\setminus\text{\rm aMwc}(\ell^2)$
(cf. Remark~\ref{reflexive not aMW}). 
By Proposition \ref{prop 13}, for each $X$: 
$\text{\rm oLwc}(X,F)=\text{\rm aLwc}(X,F)=\text{\rm L}(X,F)$ 
if $F$ is an AL-space; 
and $\text{\rm oMwc}(E,X)=\text{\rm aMwc}(E,X)=\text{\rm L}(E,X)$ 
if $E$ is an AM-space.

\begin{remark}\label{reflexive not aMW}
{\em It is easily seen that the identity operator $I_E$
\begin{enumerate}[{\rm (i)}]
\item 
in any infinite dimensional reflexive Banach lattice $E$ is not \text{\rm aMwc};
\item 
is an $l$\text{\rm -Mwc} operator iff $E^\ast$ is a KB-space.
\end{enumerate}}
\end{remark}

\medskip
\noindent
It follows directly that $\text{\rm aLwc}(\ell^2)\subseteq\text{\rm oLwc}(\ell^2)$.
Then $\text{\rm aMwc}(\ell^2)\subseteq\text{\rm oMwc}(\ell^2)$ by duality.


\begin{proposition}\label{assert_smsth}
The following inclusions hold.
\begin{enumerate}[{\rm (i)}]
\item
$\text{\rm aLwc}_+(E,F) \subseteq \text{\rm oLwc}_+(E,F) = l\text{\rm -Lwc}_+(E,F)$.
\item
$\text{\rm aMwc}_+(E,F) \subseteq \text{\rm oMwc}_+(E,F) \subseteq l\text{\rm -Mwc}_+(E,F)$.
\end{enumerate}
\end{proposition}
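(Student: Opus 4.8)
The plan is to unwind both chains through the characterizations established earlier. For the equalities and the first inclusion in (i), observe that every order bounded subset of $E$ is, after passing to its solid hull, a limited subset in the relevant sense only when working inside $E^a$; but the cleaner route is to note that for a positive operator $T:E\to F$, the three conditions ``$T$ maps order bounded sets to \text{\rm Lwc} sets'', ``$T$ maps limited sets to \text{\rm Lwc} sets'', and ``$f_n(Tx_n)\to 0$ for every disjoint \text{\rm w}$^\ast$-null $(f_n)$ in $B_{F^\ast}$ and every order bounded $(x_n)$ in $E_+$'' are all equivalent. First I would record that $\text{\rm aLwc}_+(E,F)\subseteq\text{\rm oLwc}_+(E,F)$ is immediate, since order intervals $[-x,x]$ are weakly compact precisely when they sit in $F^a$ — wait, no: the correct justification is that order bounded sets in $E$ are norm bounded, hence the needed inclusion actually follows from $\text{\rm oLwc}_+\supseteq\text{\rm Lwc}_+$ being the wrong direction, so instead I use: an order bounded set is contained in a band where things are better behaved. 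Let me restate cleanly: $\text{\rm aLwc}$ carries relatively weakly compact sets to \text{\rm Lwc} sets, while $\text{\rm oLwc}$ carries order bounded sets to \text{\rm Lwc} sets, and these two classes of domain sets are incomparable in general, so the inclusion $\text{\rm aLwc}_+\subseteq\text{\rm oLwc}_+$ must come from a positivity argument: given $T\ge 0$ aLwc and $x\in E_+$, one shows $T[-x,x]$ is \text{\rm Lwc} by approximating, using that $T^\ast$ is aMwc-type on $F^\ast$.

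The core step, and the main obstacle, is the identity $\text{\rm oLwc}_+(E,F)=l\text{\rm -Lwc}_+(E,F)$. For the inclusion $\text{\rm oLwc}_+\subseteq l\text{\rm -Lwc}_+$, take $T\ge 0$ order L-weakly compact and a limited set $A\subseteq E$; I would pass to $|A|$ and then to $\text{sol}(A)$, showing that a limited set's solid hull, intersected with the principal ideals, is handled by order-boundedness arguments together with the limited-ness forcing disjoint \text{\rm w}$^\ast$-null functionals to be uniformly null. The reverse inclusion $l\text{\rm -Lwc}_+\subseteq\text{\rm oLwc}_+$ should use that every order interval $[-x,x]$ in a Banach lattice is a limited set when... it is not in general, so here the argument must instead be: if $T$ maps every limited set to an \text{\rm Lwc} set, apply this with $A$ a suitable limited set built from the order interval, or dualize to $T^\ast$ and invoke \cite[Thm.2.3(i)]{BLM21} together with the fact that order intervals and limited sets interact through Definition~\ref{limited subset}(a),(c). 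I expect the honest proof here to dualize: $T\in l\text{\rm -Lwc}_+(E,F)$ iff $T^\ast$ satisfies a limited-M-type condition, and $T\in\text{\rm oLwc}_+(E,F)$ iff $T^\ast\in\text{\rm oMwc}_+(F^\ast,E^\ast)$ by \cite[Thm.2.3(i)]{BLM21}, reducing (i) to (ii).

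For (ii), the inclusion $\text{\rm aMwc}_+(E,F)\subseteq\text{\rm oMwc}_+(E,F)$ follows because order bounded sequences $(f_n)$ in $F^\ast$ that are dominated by a fixed $f\in F^\ast_+$ — in particular any weakly convergent order bounded sequence is weakly convergent, so the aMwc hypothesis ($f_n(Tx_n)\to 0$ for weakly convergent $(f_n)$) applies; but order bounded sequences need not be weakly convergent, so one must instead observe that for $T\ge 0$ and order bounded $(f_n)$ with $|f_n|\le f$, a Dini/disjointification argument reduces to the aMwc condition. Then $\text{\rm oMwc}_+(E,F)\subseteq l\text{\rm -Mwc}_+(E,F)$: given $T\ge 0$ order M-weakly compact and $(x_n)$ disjoint bounded in $E$, I must show $Tx_n\xrightarrow{\text{\rm w}}0$, i.e. $f(Tx_n)\to 0$ for all $f\in F^\ast$; splitting $f=f^+-f^-$ reduces to $f\ge 0$, and then $(f)$ is trivially order bounded (constant sequence $\le f$), so the oMwc hypothesis with $f_n\equiv f$ gives exactly $f(Tx_n)\to 0$. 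This last step is the easy one; the genuinely delicate point throughout is the passage from arbitrary limited sets to order intervals in establishing the equality in (i), where I anticipate needing Definition~\ref{limited subset}(c) (almost limited sets) as an intermediary and the property (d) or $\sigma$-Dedekind completeness may implicitly enter, though for positive operators the solid-hull structure should suffice without extra hypotheses.
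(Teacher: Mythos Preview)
Your proposal contains one clean observation but misses the key mechanisms in several places.

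For the inclusion $\text{\rm aLwc}_+(E,F)\subseteq\text{\rm oLwc}_+(E,F)$, you never land on an argument. The paper's proof is short and rests on a fact you do not invoke: if $T\ge 0$ is aLwc then $T(E)\subseteq F^a$. Since $F^a$ is an order ideal, for each $x\in E_+$ the interval $[0,Tx]$ lies entirely in $F^a$; being almost order bounded in a Banach lattice with order-continuous norm, it is automatically an \text{\rm Lwc} set by \cite[Prop.\,3.6.2]{Mey91}. Then $T[0,x]\subseteq[0,Tx]$ is \text{\rm Lwc}. No approximation and no adjoint are needed.

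For the equality $\text{\rm oLwc}_+=l\text{\rm -Lwc}_+$, your attempts in both directions stall. The direction $l\text{\rm -Lwc}_+\subseteq\text{\rm oLwc}_+$ cannot come from showing order intervals are limited (they generally are not, as you correctly note). The paper instead invokes \cite[Thm.\,4.2]{OM22}: a positive $l$-Lwc operator carries \emph{almost order bounded} sets to \text{\rm Lwc} sets, and order intervals are trivially almost order bounded. Your proposal to ``dualize and reduce (i) to (ii)'' runs the logic in the wrong direction: the paper proves (i) first and then obtains (ii) from (i) via $T\mapsto T^\ast$ and the duality results \cite[Thm.\,2.5]{BLM18}, \cite[Thm.\,2.3]{BLM21}, \cite[Thm.\,4.13]{OM22}.

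The one place your proposal improves on the paper is the inclusion $\text{\rm oMwc}(E,F)\subseteq l\text{\rm -Mwc}(E,F)$: testing the oMwc condition against the constant sequence $f_n\equiv f$ (which is order bounded) gives $f(Tx_n)\to 0$ for every $f\in F^\ast$ and every disjoint bounded $(x_n)$, directly, and without even using positivity. The paper instead routes this through $T^\ast\in\text{\rm oLwc}_+\subseteq l\text{\rm -Lwc}_+$ and back via \cite[Thm.\,4.13]{OM22}. Your argument is cleaner here, but it does not rescue the missing pieces in (i).
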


\begin{proof}
(i) \ 
Let $0\leq T\in \text{\rm aLwc}(E,F)$. Then $T(E)\subseteq F^a$.
As $F^a$ is an order ideal in $F$, $[0,Tx]\subset F^a$ for $0\leq x \in E$.
Since $[0,Tx]$ is almost order bounded in $F^a$, it is an $\text{\rm Lwc}$ set by 
\cite[Prop.\,3.6.2]{Mey91}. Hence $T[0,x]$ is an $\text{\rm Lwc}$ set since 
$T[0,x]\subseteq [0,Tx]$. Thus $T\in \text{\rm oLwc}$.

First, $\text{\rm oLwc}(E,F) \subseteq l\text{\rm -Lwc}(E,F)$. 
Let  $T\in \text{\rm oLwc}(E,F)$. Then (by~\cite[Thm.\,2.1]{BLM21})  
 $|T^\ast f_n|\stackrel{\text{\rm w}^\ast}{\to}0$
for each disjoint $(f_n)\subseteq B_{F^\ast}$. 
Note that
$$
 T^\ast f_n(x) \leq |T^\ast f_n(x)| \leq |T^\ast f_n|(|x|) \quad x\in E. 
$$

Next we show $l\text{\rm -Lwc}_+(E,F) \subseteq \text{\rm oLwc}(E,F)$.
Let $0\leq T$. Then by \cite[Thm~4.2]{OM22}, $T$ maps almost order bounded 
sets in $E$ to $\text{\rm Lwc}$ sets in $F$. 
Let $[-x,x]+\varepsilon B_E$ be an almost order bounded set in $E$. 
Then $T([-x,x]+\varepsilon B_E)$ is an $\text{\rm Lwc}$ set in $F$.
As subsets of $\text{\rm Lwc}$ set are $\text{\rm Lwc}$, it follows that
$T[0,x]$ is an $\text{\rm Lwc}$ set for $x\in E_+$ and 
$T\in \text{\rm oLwc}(E,F)$ so 
$$
\text{\rm aLwc}_+(E,F) \subseteq \text{\rm oLwc}_+(E,F) = l\text{\rm -Lwc}_+(E,F).
$$

(ii) \ 
Let $0\leq T\in \text{\rm aMwc}(E,F)$. 
Then $T^\ast\in \text{\rm aLwc}_+(F^\ast,E^\ast)$ by
\cite[Thm.\,2.5]{EAS}, which implies $T^\ast\in \text{\rm oLwc}_+(F^\ast,E^\ast)$. 
Therefore $T\in \text{\rm oMwc}_+(E,F)$ by \cite[Thm.\,2.3]{BLM21}.
Thus $\text{\rm aMwc}_+(E,F) \subseteq \text{\rm oMwc}_+(E,F)$.

 If $T\in \text{\rm oMwc}_+(E,F)$, then $T^\ast\in \text{\rm oLwc}_+(F^\ast,E^\ast)$ 
by \cite[Thm.\,2.3]{BLM21}. Then $T^\ast\in l\text{\rm -Lwc}$ by the previous remarks. 
If  $T^\ast\in l\text{\rm -Lwc}_+(F^\ast,E^\ast)$, then by \cite[Thm.\,4.13]{OM22},
$T\in l\text{\rm -Mwc}_+(E,F)$, and we have the required.
\end{proof}

\subsection{On composition with \text{\rm oL(M)wc} operators.}

\begin{proposition}\label{Composition of oLMwc}
The following holds.
\begin{enumerate}[{\rm (i)}]
\item
If $S\in{\rm L}_{ob}(E,F)$ and $T\in\text{\rm oLwc}(F,G)$ 
then $T  S\in\text{\rm oLwc}(E,G)$.
\item
If $S\in{\rm L}_{ob}(F,G)$ and $T\in\text{\rm oMwc}(E,F)$, 
then $S  T\in\text{\rm oMwc}(E,G)$.
\item
If $S\in \text{\rm semi-K}(E,F)$ and $T\in \text{\rm oLwc}(F,G)$,
then $TS\in \text{\rm Lwc}(E,G)$
\end{enumerate}
\end{proposition}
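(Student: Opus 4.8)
My plan is to treat the three items essentially independently: derive (i) straight from the definitions, reduce (ii) to (i) by passing to adjoints, and prove (iii) directly with a small stability observation about \text{\rm Lwc} sets. For (i), I would argue as follows. Let $A\subseteq E$ be order bounded, say $A\subseteq[-x,x]$ with $x\in E_+$. Since $S\in\text{\rm L}_{ob}(E,F)$ carries order intervals onto order bounded sets, $S(A)\subseteq S[-x,x]\subseteq[-y,y]$ for some $y\in F_+$. As $T\in\text{\rm oLwc}(F,G)$, the set $T[-y,y]$ is \text{\rm Lwc}, hence so is its subset $TS(A)$ (a subset of an \text{\rm Lwc} set is \text{\rm Lwc}, since the solid hull is monotone). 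Since $TS$ is bounded, this gives $TS\in\text{\rm oLwc}(E,G)$.

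For (ii), the plan is to pass to the adjoint and invoke (i). By the duality between $\text{\rm oMwc}$ and $\text{\rm oLwc}$ operators \cite[Thm.\,2.3]{BLM21}, $T\in\text{\rm oMwc}(E,F)$ forces $T^\ast\in\text{\rm oLwc}(F^\ast,E^\ast)$, and conversely it suffices to show $(ST)^\ast=T^\ast S^\ast\in\text{\rm oLwc}(G^\ast,E^\ast)$. Since $S\in\text{\rm L}_{ob}(F,G)$, its adjoint is order bounded, $S^\ast\in\text{\rm L}_{ob}(G^\ast,F^\ast)$, so applying part (i) to the composition $T^\ast\circ S^\ast$ (with $S^\ast$ in the role of ``$S$'' and $T^\ast$ in the role of ``$T$'') yields $T^\ast S^\ast\in\text{\rm oLwc}(G^\ast,E^\ast)$, whence $ST\in\text{\rm oMwc}(E,G)$. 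Equivalently, one can run the computation directly: for a disjoint bounded $(x_n)$ in $E$ and an order bounded $(g_n)$ in $G^\ast$ with $|g_n|\le g$, the functionals $S^\ast g_n$ all lie in the order bounded set $S^\ast[-g,g]\subseteq F^\ast$, so $g_n(STx_n)=(S^\ast g_n)(Tx_n)\to 0$ because $T\in\text{\rm oMwc}(E,F)$.

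For (iii), I would use that $S\in\text{\rm semi-K}(E,F)$ maps $B_E$ onto an almost order bounded set: for each $\varepsilon>0$ there is $u_\varepsilon\in F_+$ with $S(B_E)\subseteq[-u_\varepsilon,u_\varepsilon]+\varepsilon B_F$, hence
$$
 TS(B_E)\subseteq T[-u_\varepsilon,u_\varepsilon]+\varepsilon\|T\|\,B_G ,
$$
where $T[-u_\varepsilon,u_\varepsilon]$ is \text{\rm Lwc} since $T\in\text{\rm oLwc}(F,G)$. The remaining step is the stability of \text{\rm Lwc}-ness under such approximation: if a bounded $D\subseteq G$ satisfies $D\subseteq K_\varepsilon+\varepsilon B_G$ with $K_\varepsilon$ \text{\rm Lwc} for every $\varepsilon>0$, then $D$ is \text{\rm Lwc}. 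For this, given a disjoint $(z_n)$ in $\text{\rm sol}(D)$ and $\varepsilon>0$, write $|z_n|\le|k_n|+|b_n|$ with $k_n\in K_\varepsilon$ and $\|b_n\|\le\varepsilon$, and set $w_n:=(|z_n|-|b_n|)^+$; then $0\le w_n\le|k_n|$ gives $w_n\in\text{\rm sol}(K_\varepsilon)$, while $0\le w_n\le|z_n|$ keeps $(w_n)$ disjoint, so $\|w_n\|\to 0$ and $\limsup_n\|z_n\|\le\varepsilon$. Applying this to $D=TS(B_E)$ and then \eqref{1a} gives $TS\in\text{\rm Lwc}(E,G)$.

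I expect the one genuinely non-formal input to be in (ii): that the adjoint of an order bounded operator between Banach lattices is order bounded. This is automatic when $S$ is regular, since then $S^\ast$ is regular, and indeed several formulations state (ii) only for $S\in\text{\rm L}_r$; if one insists on $\text{\rm L}_{ob}$ this is the point to verify carefully. The stability lemma used in (iii) is routine once the decomposition $w_n=(|z_n|-|b_n|)^+$ is in hand, and (i) is immediate.
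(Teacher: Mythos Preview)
Your proof is correct and follows essentially the same route as the paper: (i) is immediate from the definitions, (ii) goes via adjoints and the duality $\text{\rm oMwc}\leftrightarrow\text{\rm oLwc}$ of \cite[Thm.\,2.3]{BLM21}, and (iii) uses the almost-order-bounded decomposition of $S(B_E)$ together with stability of \text{\rm Lwc} sets under $\varepsilon$-perturbation. You are in fact more careful than the paper in two places---correctly singling out $S^\ast$ (rather than $T^\ast$) as the operator whose order boundedness is required in (ii), and supplying an explicit argument for the \text{\rm Lwc}-stability step in (iii) that the paper simply asserts.
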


\begin{proof}
(i)\ 
follows directly from the definition of \text{\rm oLwc} operator.
\medskip

(ii) \  
Let $T\in\text{\rm oMwc}(E,F)$ and $S\in{\rm L}_{ob}(F,G)$. 
Then $T^\ast \in\text{\rm oLwc}(F^\ast ,E^\ast )$ 
by \cite[Thm.2.3\,(i)]{BLM21}, 
and $T^\ast$ is order bounded by \cite[Thm.\,1.73]{AlBu}. 
It follows from (i) that $(S  T)^\ast =T^\ast   S^\ast \in\text{\rm oLwc}(G^\ast ,E^\ast )$.
Applying \cite[Thm.2.3\,(i)]{BLM21} 
again, we get $S  T\in\text{\rm oMwc}(E,G)$.
\medskip

(iii) \ 
WLOG suppose $\|S\|\leq 1$.
Let $T\in \text{\rm oLwc}(F,G)$ and $\varepsilon>0$. 
Pick $u_\varepsilon\in F_+$ such that
$S(B_E)\subseteq [-u_\varepsilon,u_\varepsilon]+\varepsilon B_F$. Then
$$
   TS(B_E)\subseteq T[-u_\varepsilon,u_\varepsilon]+\varepsilon T(B_F)
                          \subseteq T[-u_\varepsilon,u_\varepsilon]+\varepsilon B_G .
$$
Since $T\in \text{\rm oLwc}(F,G)$, the set $T[-u_\varepsilon,u_\varepsilon]$ 
is an \text{\rm Lwc} subset of $G$. Since $\varepsilon>0$ is arbitrary,  $TS(B_E)$ 
is an \text{\rm Lwc} subset of  $G$ which yields $TS\in \text{\rm Lwc}(E,G)$.
\end{proof}


\begin{proposition}\label{prop 14}
The following statements hold.  
\begin{enumerate}[\em (i)]
\item
If $F$ is an AL-space then $\text{\rm span}(\text{\rm oLwc}_+(E,F))=\text{\rm L}_r(E,F)$.
\item
If $E$ is an AM-space then $\text{\rm span}(\text{\rm oMwc}_+(E,F))=\text{\rm L}_r(E,F)$.
\end{enumerate}
\end{proposition}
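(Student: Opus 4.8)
The plan is to prove both parts by the same two-step scheme: first show the right-hand side contains the left-hand side (the span of positive $\text{\rm oLwc}$, resp. $\text{\rm oMwc}$, operators sits inside $\text{\rm L}_r(E,F)$), then show the reverse inclusion using the special structure of AL-spaces (resp. AM-spaces). The first direction is essentially formal: a positive operator between Banach lattices is automatically order bounded, hence regular, so $\text{\rm oLwc}_+(E,F)\subseteq\text{\rm L}_{ob}(E,F)\cap\text{\rm L}(E,F)=\text{\rm L}_r(E,F)$, and since $\text{\rm L}_r(E,F)$ is a linear subspace of $\text{\rm L}(E,F)$, it contains the span. The same remark gives $\text{\rm oMwc}_+(E,F)\subseteq\text{\rm L}_r(E,F)$.

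For (i), the reverse inclusion reduces, by definition of $\text{\rm L}_r$ as the span of the positive operators, to showing that every $0\le S\in\text{\rm L}(E,F)$ is $\text{\rm oLwc}$ when $F$ is an AL-space. Here I would invoke the fact already used in the proof of Proposition~\ref{prop 13}(i): an AL-space has order-continuous norm, i.e. $F=F^a$. Then for $x\in E_+$ the order interval $[0,Sx]\subseteq F=F^a$; by \cite[Thm.4.51]{AlBu} (or the standard fact about AL-spaces) order intervals in $F$ are weakly compact, hence $[0,Sx]$ is relatively weakly compact, hence by \cite[Thm.5.56]{AlBu} it is an $\text{\rm Lwc}$ set, and therefore so is its subset $S[0,x]$. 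Thus $S\in\text{\rm oLwc}_+(E,F)$, giving $\text{\rm L}_r(E,F)=\text{\rm span}(\text{\rm L}_{r,+}(E,F))\subseteq\text{\rm span}(\text{\rm oLwc}_+(E,F))$. An alternative, shorter route: by Proposition~\ref{prop 13}(i), $\text{\rm aLwc}(X,F)=\text{\rm L}(X,F)$ for AL-spaces, and by Proposition~\ref{assert_smsth}(i), $\text{\rm aLwc}_+(E,F)\subseteq\text{\rm oLwc}_+(E,F)$; so every positive bounded operator is $\text{\rm oLwc}$, and one spans.

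For (ii), when $E$ is an AM-space, $E^\ast$ is an AL-space, hence a KB-space; I would then apply Proposition~\ref{regular operators are oMwc}, (i)$\Rightarrow$(ii), to conclude $\text{\rm L}_{ob}(E,F)\subseteq\text{\rm oMwc}(E,F)$ for every $F$. In particular every positive operator $E\to F$ lies in $\text{\rm oMwc}_+(E,F)$, and since $\text{\rm L}_r(E,F)$ is the span of its positive cone, $\text{\rm L}_r(E,F)\subseteq\text{\rm span}(\text{\rm oMwc}_+(E,F))$; combined with the first direction above this gives equality. I expect no serious obstacle: the only mild subtlety is making the decomposition argument precise, namely that $\text{\rm span}(\mathcal{P})$ for a cone $\mathcal{P}$ of operators with $\mathcal{P}=\text{\rm L}_{r,+}$ is exactly $\text{\rm L}_r$, which is immediate from the definition of a regular operator as a difference of two positive ones; everything else is a citation to results stated earlier in the excerpt.
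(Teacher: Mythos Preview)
Your proof is correct. For part~(i) your argument is essentially the paper's: both show that every positive $S:E\to F$ is $\text{\rm oLwc}$ by noting that $S[-x,x]$ lies in an order interval of the AL-space $F$, which is relatively weakly compact, and then invoking \cite[Thm.~5.56]{AlBu} to conclude it is an $\text{\rm Lwc}$ set. (Your citation of \cite[Thm.~4.51]{AlBu} for weak compactness of order intervals is off---that result concerns lattice embeddings of $\ell^\infty$; the relevant fact is that order intervals in a Banach lattice with order-continuous norm are weakly compact.)

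For part~(ii) you take a genuinely different route. The paper dualizes part~(i): since $E^\ast$ is an AL-space, $T^\ast\in\text{\rm span}(\text{\rm oLwc}_+(F^\ast,E^\ast))$ by~(i), and then \cite[Thm.~2.3(i)]{BLM21} (the duality $T\in\text{\rm oMwc}\Leftrightarrow T^\ast\in\text{\rm oLwc}$) pulls this back to $T\in\text{\rm span}(\text{\rm oMwc}_+(E,F))$. You instead invoke Proposition~\ref{regular operators are oMwc} directly: $E^\ast$ is an AL-space, hence a KB-space, so $\text{\rm L}_{ob}(E,F)\subseteq\text{\rm oMwc}(E,F)$, and in particular every positive operator is $\text{\rm oMwc}$. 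Your route is slightly shorter given the tools already on the table, and avoids having to check that the decomposition $T^\ast=S_1-S_2$ into positive $\text{\rm oLwc}$ operators can be chosen with each $S_i$ an adjoint; the paper's route has the aesthetic advantage of deriving~(ii) purely from~(i) plus duality.
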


\begin{proof}
(i) Let $S=S_1-S_2$ with both $S_1,S_2:E\to F$ positive, and let $x\in X_+$. 
Then $S_1([-x,x])$ and $S_2([-x,x])$ are both relatively weakly compact in $F$.
By \cite[Thm.5.56]{AlBu}, the sets $S_1([-x,x])$ and $S_2([-x,x])$ are both
\text{\rm Lwc} subsets of $F$. Then $S_1$ and $S_2$ are 
positive \text{\rm oLwc} operators and 
hence $S\in \text{\rm span}(\text{\rm oLwc}_+(E,F))$. 

\medskip
(ii) Let $T\in\text{\rm L}_r(E,F)$. Since $E^\ast $ is an AL-space, 
it follows from~(i) that 
$T^\ast \in\text{\rm span}(\text{\rm oLwc}_+(F^\ast ,E^\ast ))$. 
Then $T\in\text{\rm span}(\text{\rm oMwc}_+(E,F))$ by 
\cite[Thm.2.3\,(i)]{BLM21}. 
\end{proof}

\section{On weak compactness}

\subsection{}
Here we include some observations on almost limited operators.

\begin{proposition}\label{newmncP1}
The following holds.
\begin{enumerate}[{\em (i)}]
\item
If $F=F^a$ and $F$ has sequentially weakly  continuous lattice operations, then
$\text{\rm aLim}(E,F)\cap l\text{\rm -Mwc}(E,F) \subseteq \text{\rm Mwc}(E,F)$.
\item
If $E^\ast$ is a \text{\rm KB}-space then $\text{\rm aDP}_+(E,F) \subseteq \text{\rm Mwc}(E,F)$.
\end{enumerate}
\end{proposition}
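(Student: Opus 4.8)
The plan is to prove the two inclusions separately, each by reducing to the definition of $\text{\rm Mwc}$: given a disjoint bounded sequence $(x_n)$ in $E$, show $\|Tx_n\|\to 0$.

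For part (i), let $T\in\text{\rm aLim}(E,F)\cap l\text{\rm -Mwc}(E,F)$ and let $(x_n)$ be disjoint bounded in $E$. Since $T\in l\text{\rm -Mwc}(E,F)$ we have $Tx_n\stackrel{\text{\rm w}}{\to}0$ in $F$; because $F$ has sequentially weakly continuous lattice operations, this gives $|Tx_n|\stackrel{\text{\rm w}}{\to}0$. Now invoke the characterization of norm convergence used already in the proof of Proposition~\ref{propmn1}, namely \cite[Cor.2.6]{DoFr}: to conclude $\|Tx_n\|\to 0$ it suffices to show, in addition to $|Tx_n|\stackrel{\text{\rm w}}{\to}0$, that $g_n(Tx_n)\to 0$ for every disjoint bounded $(g_n)$ in $F^\ast_+$. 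Since $F=F^a$, such a disjoint bounded sequence in $F^\ast$ is $\text{\rm w}^\ast$-null, and then $\|T^\ast g_n\|\to 0$ because $T\in\text{\rm aLim}(E,F)$ (this is exactly the mechanism used in Propositions~\ref{aLim=aLwc} and \ref{propmn1}). The estimate $|g_n(Tx_n)|=|T^\ast g_n(x_n)|\le\|T^\ast g_n\|\cdot\|x_n\|\le M\|T^\ast g_n\|\to 0$, with $M=\sup_n\|x_n\|$, finishes part (i).

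For part (ii), let $E^\ast$ be a KB-space, $0\le T\in\text{\rm aDP}(E,F)$, and $(x_n)$ disjoint bounded in $E$. Since $E^\ast$ is a KB-space its norm is o-continuous, so by \cite[2.4.14]{Mey91} every disjoint bounded sequence in $E$ is weakly null; hence $x_n\stackrel{\text{\rm w}}{\to}0$. In particular $(x_n)$ is disjoint and weakly null, so the defining property of an almost Dunford--Pettis operator applies directly: $\|Tx_n\|\to 0$. (One could phrase it through $|x_n|$ and positivity of $T$ as in Proposition~\ref{propmn1}, but it is unnecessary: $\text{\rm aDP}$ already takes disjoint w-null sequences to norm-null ones by Definition~\ref{DPOcoll}\,c).) Thus $T\in\text{\rm Mwc}(E,F)$.

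The only genuinely delicate point is in part (i): making sure the two hypotheses on $F$ are used in the right places --- $F=F^a$ to upgrade "disjoint bounded in $F^\ast$" to "$\text{\rm w}^\ast$-null" so that $\text{\rm aLim}$ bites, and sequentially weakly continuous lattice operations together with the $l\text{\rm -Mwc}$ hypothesis to get $|Tx_n|\stackrel{\text{\rm w}}{\to}0$, which is the second input required by \cite[Cor.2.6]{DoFr}. Part (ii) is essentially immediate from the o-continuity of the norm on $E$ (a consequence of $E^\ast$ being a KB-space) plus the definition; positivity of $T$ plays no essential role beyond matching the statement of aDP$_+$.
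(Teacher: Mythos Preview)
Your proposal is correct and follows essentially the same route as the paper's own proof: for (i), use $l\text{-Mwc}$ plus weakly sequentially continuous lattice operations to get $|Tx_n|\stackrel{\rm w}{\to}0$, then combine $F=F^a$ with $\text{aLim}$ to force $\|T^\ast g_n\|\to 0$ for disjoint bounded $(g_n)$ in $F^\ast_+$, and conclude via \cite[Cor.2.6]{DoFr}; for (ii), use o-continuity of the norm on $E^\ast$ (from $E^\ast$ being a KB-space) to make disjoint bounded sequences in $E$ weakly null, then apply the definition of $\text{aDP}$. Your write-up is in fact slightly more explicit than the paper's in pinpointing where $F=F^a$ is used in (i), and your observation that positivity of $T$ is not genuinely needed in (ii) is correct.
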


\begin{proof}
(i) \ 
Let $T\in\text{\rm aLim}(E,F)\cap l\text{\rm -Mwc}(E,F)$.
Take a disjoint $(x_n)$ in $B_E$. We need to show $\|Tx_n\|\to 0$. 
As $T\in l\text{\rm -Mwc}(E,F)$, $Tx_n\stackrel{\text{\rm w}}{\to}0$
in $F$, and by the assumption on $F$, $|Tx_n|\stackrel{\text{\rm w}}{\to}0$.
If $(f_n)$ is a disjoint bounded sequence in $F^\ast_+$, then 
$|f_n(Tx_n)|\leq \|T^\ast f_n\|\cdot\|x_n\|$. As $\|T^\ast f_n\|\to 0$, then 
$T\in\text{\rm Mwc}(E,F)$ by~\cite[Cor.2.6]{DoFr}. 

\medskip
(ii) \ 
Let $T\in\text{\rm aDP}_+(E,F)$.
Let $(x_n)$ be a disjoint sequence in $B_E$. We need to show $\|Tx_n\|\to 0$. 
Since $E^\ast$ is a KB-space, $x_n\stackrel{\text{\rm w}}{\to}0$ and hence
 $Tx_n\stackrel{\text{\rm w}}{\to}0$ in $F$. As $T\in\text{\rm aDP}(E,F)$ then $\|T x_n\|\to 0$ in $F$.
\end{proof}

\begin{remark}{\em
In general, neither $l$-Lwc nor $l$-Mwc operators are weakly compact.
For example, $T=I_{\ell^\infty}$ (and hence $T^\ast=I_{(\ell^\infty)^\ast}$) is not weakly compact, 
yet $T^\ast\in l\text{\rm -Lwc}((\ell^\infty)^\ast)$ by \cite[Thm.2]{AEG_duality}, 
and hence $T\in l\text{\rm -Mwc}(\ell^\infty)$ by \cite[Thm.4]{AEG_duality}.
Proposition~\ref{newmncP1} shows that those $l$-Mwc operators that are almost limited are weakly compact
as they are Mwc.
}
\end{remark}

\subsection{}
The next result shows the intimate relations between the operators that are considered. 
We need the following definition.

\begin{definition}\label{E has many of}
{\em A Banach lattice $E$ has
\begin{enumerate}[{(i)}]
\item
the {\em dual positive Schur property} (or $E\in\text{\rm (DPSP)}$) if each positive 
\text{\rm w}$^\ast$-null sequence in $E^\ast$ is norm null; 
\item
the {\em positive Grothendieck property} 
(or $E\in\text{\rm (PGP)}$) if each positive 
\text{\rm w}$^\ast$-null sequence in $E^\ast$ is weakly null (cf. \cite[p.\,760]{Wnuk2013}).
\end{enumerate}
}
\end{definition}

\begin{theorem}\label{1.2.3_new}
Suppose $F=F^a$.
\begin{enumerate}[{\em (i)}]
\item 
If $F\in\text{\rm (PGP)}$ then
$\text{\rm wDP}_+(E,F)\subseteq\text{\rm aDP}(E,F)$ for every $E$.
\item 
If $E\in\text{\rm (DPSP)}$ $($or $E^\ast \in\text{\rm (PSP)}$$)$ then
$\text{\rm wDP}_+(E,F)\subseteq \text{\rm aDP}(E,F)$.
\item 
If $E^\ast$ is a $\text{\rm KB}$-space, then 
$\text{\rm wDP}(E,F) \subseteq\text{\rm aMwc}(E,F)$
and \\
$\text{\rm DP}_+(E,F)\subseteq \text{\rm aLim}_+(E,F) \subseteq\text{\rm aDP}(E,F)$.
\end{enumerate}
\end{theorem}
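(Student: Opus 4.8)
The plan is to prove the three parts in sequence, each time reducing the statement to the behaviour of the operator on a single disjoint (or weakly null) sequence, and then exploiting the hypothesis $F=F^a$ together with the relevant Schur-type property. For all three parts I will repeatedly use the elementary fact that, since $F=F^a$, for a sequence $(y_n)$ in $F$ one has $\|y_n\|\to 0$ as soon as $y_n\stackrel{\text{\rm w}}{\to}0$ \emph{and} $f_n(y_n)\to 0$ for every disjoint bounded $(f_n)$ in $F^\ast_+$ (Dodds--Fremlin, \cite[Cor.2.6]{DoFr}), exactly as in the proof of Proposition~\ref{propmn1}.

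For (i): let $0\le T\in\text{\rm wDP}_+(E,F)$ and let $(x_n)$ be a disjoint \text{\rm w}-null sequence in $E$; I must show $\|Tx_n\|\to 0$. First, since $T$ is weak Dunford--Pettis and $(x_n)$ is \text{\rm w}-null, for every \text{\rm w}-null $(f_n)$ in $F^\ast$ we have $f_n(Tx_n)\to 0$; in particular $Tx_n\stackrel{\text{\rm w}}{\to}0$. To invoke Dodds--Fremlin I still need $g_n(Tx_n)\to 0$ for every disjoint bounded $(g_n)$ in $F^\ast_+$. Here the hypothesis $F\in\text{\rm (PGP)}$ enters: a disjoint bounded sequence in $F^\ast_+$ is \text{\rm w}$^\ast$-null (since $F=F^a$, by \cite[2.4.14]{Mey91} applied in $F^\ast$... actually more simply because disjoint order-bounded-type arguments give \text{\rm w}$^\ast$-nullity), hence by (PGP) it is weakly null, and then $g_n(Tx_n)\to 0$ by the weak Dunford--Pettis property of $T$. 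Combining with $Tx_n\stackrel{\text{\rm w}}{\to}0$ and Dodds--Fremlin gives $\|Tx_n\|\to 0$, so $T\in\text{\rm aDP}(E,F)$.

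For (ii): again take $0\le T\in\text{\rm wDP}_+(E,F)$ and a disjoint \text{\rm w}-null $(x_n)$ in $E$. As in (i), $Tx_n\stackrel{\text{\rm w}}{\to}0$, and it remains to kill $g_n(Tx_n)$ for disjoint bounded $(g_n)\subseteq F^\ast_+$. Now such $(g_n)$ is \text{\rm w}$^\ast$-null in $F^\ast$; if $E\in\text{\rm (DPSP)}$... wait — the Schur property is on $E$, not $F$. The correct route: since $(g_n)$ is disjoint and order-type bounded, $\|T^\ast g_n\|$ can be controlled. Actually the cleanest argument uses $E^\ast\in\text{\rm (PSP)}$: consider instead that $T^\ast g_n\stackrel{\text{\rm w}^\ast}{\to}0$ is a positive sequence, so if $E^\ast\in\text{\rm (PSP)}$ then... no, PSP concerns \text{\rm w}-null not \text{\rm w}$^\ast$-null. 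I would therefore handle (ii) via the equivalent formulation $E\in\text{\rm (DPSP)}$: the sequence $T^\ast g_n$ is positive and \text{\rm w}$^\ast$-null in $E^\ast$, hence norm null by (DPSP), whence $|g_n(Tx_n)|\le\|T^\ast g_n\|\cdot\|x_n\|\to 0$; the parenthetical case $E^\ast\in\text{\rm (PSP)}$ follows because $E^\ast\in\text{\rm (PSP)}\Rightarrow E\in\text{\rm (DPSP)}$ (a positive \text{\rm w}$^\ast$-null sequence in $E^\ast$ that is \text{\rm w}-null would be norm null, and one reduces to this — this implication is standard, cf.\ \cite{Wnuk2013}). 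Then Dodds--Fremlin finishes as before. The main obstacle in (ii) is pinning down precisely which implication between (DPSP) and $E^\ast\in\text{\rm (PSP)}$ is being used and citing it correctly; I expect to resolve it by recalling that $E^\ast\in\text{\rm (PSP)}$ is strictly stronger than $E\in\text{\rm (DPSP)}$.

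For (iii): assume $E^\ast$ is a KB-space, equivalently the norm on $E^\ast$ is \text{\rm o}-continuous. \emph{First claim:} $\text{\rm wDP}(E,F)\subseteq\text{\rm aMwc}(E,F)$. Let $T\in\text{\rm wDP}(E,F)$, let $(x_n)$ be disjoint bounded in $E$ and $(f_n)$ \text{\rm w}-convergent in $F^\ast$, say $f_n\stackrel{\text{\rm w}}{\to}f$. By \cite[2.4.14]{Mey91}, disjoint bounded sequences in $E$ are \text{\rm w}-null, so $x_n\stackrel{\text{\rm w}}{\to}0$; then $(f_n-f)\stackrel{\text{\rm w}}{\to}0$ in $F^\ast$ gives $(f_n-f)(Tx_n)\to 0$ by the wDP property, while $f(Tx_n)=T^\ast f(x_n)\to 0$ since $x_n\stackrel{\text{\rm w}}{\to}0$; adding, $f_n(Tx_n)\to 0$, i.e.\ $T\in\text{\rm aMwc}(E,F)$. \emph{Second claim:} $\text{\rm DP}_+(E,F)\subseteq\text{\rm aLim}_+(E,F)\subseteq\text{\rm aDP}(E,F)$, under $F=F^a$ and $E^\ast$ KB. For the first inclusion, let $0\le T\in\text{\rm DP}(E,F)$; to show $T(B_E)$ is almost limited, take a disjoint \text{\rm w}$^\ast$-null $(f_n)$ in $F^\ast$ — since $F=F^a$ this is automatically a disjoint \text{\rm w}-null... no, \text{\rm w}$^\ast$-null does not imply \text{\rm w}-null in general; but I can instead show directly that $\sup_{x\in B_E}|f_n(Tx)|=\|T^\ast f_n\|\to 0$. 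Since $T\ge 0$ and $(f_n)$ is disjoint \text{\rm w}$^\ast$-null, $(|T^\ast f_n|)$... here I would use that $T$ being Dunford--Pettis forces $T^\ast$ to map the \text{\rm w}$^\ast$-null disjoint sequence to a norm-null one, using that $E^\ast\in$ KB means bounded disjoint sequences in $E$ are \text{\rm w}-null so $T^\ast$ restricted appropriately is compact-like. The cleanest argument: $T\in\text{\rm DP}(E,F)$ and $E^\ast$ KB $\Rightarrow$ $T$ sends the unit ball to a relatively compact set? No — rather, for $0\le T\in\text{\rm DP}$ one shows $\|T^\ast f_n\|\to 0$ for disjoint \text{\rm w}$^\ast$-null $(f_n)\subseteq F^\ast$ by noting $T^\ast$ is order bounded into $E^\ast$ and the norm on $E^\ast$ is \text{\rm o}-continuous, combined with $T$ being DP; I will supply this via \cite[Cor.2.6]{DoFr} on the dual side. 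The second inclusion $\text{\rm aLim}_+(E,F)\subseteq\text{\rm aDP}(E,F)$: given $0\le T\in\text{\rm aLim}_+(E,F)$ and a disjoint \text{\rm w}-null $(x_n)$, I have $Tx_n\stackrel{\text{\rm w}}{\to}0$ (as $T$ is bounded... actually need $T$ weakly compact-ish — but disjoint \text{\rm w}-null maps to \text{\rm w}-null under any bounded operator), and for disjoint bounded $(g_n)\subseteq F^\ast_+$, $\|T^\ast g_n\|\to 0$ by almost-limitedness (disjoint bounded sequences in $F^\ast_+$ being \text{\rm w}$^\ast$-null when... again this needs $F$ or $F^\ast$ to cooperate — I expect to use $F=F^a$ here), giving $g_n(Tx_n)\to 0$ and then $\|Tx_n\|\to 0$ by Dodds--Fremlin. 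The hard part throughout (iii) will be justifying the \text{\rm w}$^\ast$-null $\Rightarrow$ norm-null passages for disjoint sequences in the duals; I anticipate these all reduce, via \cite[2.4.14]{Mey91} and \cite[Cor.2.6]{DoFr}, to the two standing hypotheses $E^\ast\in$ KB and $F=F^a$, and I will organize part (iii) so that each such passage cites exactly one of these.
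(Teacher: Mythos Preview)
Your overall strategy matches the paper's: in each part you reduce to checking $\|Tx_n\|\to 0$ via the Dodds--Fremlin criterion, and you correctly identify the role of each hypothesis ($F\in\text{\rm (PGP)}$ in (i), $E\in\text{\rm (DPSP)}$ in (ii), $E^\ast$ KB in (iii)). The paper proceeds exactly this way.

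There is, however, one genuine slip. The version of Dodds--Fremlin you state---``$\|y_n\|\to 0$ as soon as $y_n\stackrel{\text{\rm w}}{\to}0$ and $f_n(y_n)\to 0$ for every disjoint bounded $(f_n)$ in $F^\ast_+$''---is \emph{false}, even when $F=F^a$. Take $F=L^2[0,1]$ and $y_n$ the Rademacher functions: $y_n\stackrel{\text{\rm w}}{\to}0$, and for any disjoint $(f_n)$ in $(B_{L^2})_+$ the supports have measures summing to at most $1$, so $|f_n(y_n)|\le\|f_n\|_1\le(\text{meas}(\text{supp}\,f_n))^{1/2}\to 0$; yet $\|y_n\|=1$. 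The correct criterion (\cite[Cor.\,2.6]{DoFr}) requires $|y_n|\stackrel{\text{\rm w}}{\to}0$, not merely $y_n\stackrel{\text{\rm w}}{\to}0$.

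This gap is easy to close, and the paper does exactly that: since $(x_n)$ is disjoint and \text{\rm w}-null, also $|x_n|\stackrel{\text{\rm w}}{\to}0$ (\cite[Thm.\,4.34]{AlBu}); then $|Tx_n|\le T|x_n|\stackrel{\text{\rm w}}{\to}0$ because $T\ge 0$. This is in fact the \emph{only} place in (i) and (ii) where positivity of $T$ is used, and your write-up never invokes it. Once you insert this step, your arguments for (i) and (ii) are identical to the paper's; your route through (DPSP) in (ii) is precisely the paper's, and your handling of (iii) (splitting $f_n = (f_n-f)+f$ for the first inclusion, and using $\|T^\ast f_n\|\to 0$ via Dodds--Fremlin in $E^\ast$ for the second) is also what the paper does, though the paper organizes the $\text{\rm DP}_+\subseteq\text{\rm aLim}_+$ step more cleanly by applying \cite[Cor.\,2.6]{DoFr} to $(T^\ast f_n)$ directly.
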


\begin{proof}
(i) \ 
Let $T\in\text{\rm wDP}_+(E,F)$, and let $(x_n)$ be a disjoint w-null sequence in $E$. 
We need to show $\|Tx_n\|\to 0$. By~\cite[Cor.2.6]{DoFr}, 
we must show $|Tx_n|\stackrel{\rm w}{\to}0$ and $f_n(Tx_n)\to 0$ for each disjoint bounded $(f_n)$ in $F^\ast_+$.
Since $x_n\stackrel{\rm w}{\to}0$ and disjoint, then $|x_n|\stackrel{\rm w}{\to}0$ by \cite[Thm.\,4.34]{AlBu}. 
As $T$ is positive, $|Tx_n|\leq T|x_n|$, and hence $|Tx_n|\stackrel{\rm w}{\to}0$.

Let now $(f_n)$ be a positive disjoint sequence  in $B_{F^\ast}$. Since $F=F^a$, $f_n\stackrel{{\rm w}^\ast}{\to}0$
and, as $F\in \text{\rm (PGP)}$, $f_n\stackrel{\rm w}{\to}0$.
Since $T\in\text{\rm wDP}(E,F)$,  $f_n(Tx_n)\to 0$ and, by~\cite[Cor.2.6, Cor.2.7]{DoFr}, 
we have $\|Tx_n\|\to 0$.

\medskip
(ii) \ 
Let  $0\leq T\in \text{\rm wDP}(E,F)$, and let $(x_n)$ be a w-null disjoint sequence in $E$. 
We must show $\|Tx_n\|\to 0$.
As $(x_n)$ is disjoint w-null, $|x_n|\stackrel{\rm w}{\to}0$ by \cite[Thm.~4.34]{AlBu}.
Then $|Tx_n|\leq T|x_n|$ and thus $|Tx_n|\stackrel{\rm w}{\to}0$.
Let $(f_n)$ be disjoint in $B_{F^\ast}$.  
Since $F=F^a$, $f_n\stackrel{{\rm w}^\ast}{\to}0$ by \cite[Cor.\,2.4.3]{Mey91},
and hence $T^\ast f_n \stackrel{{\rm w}^\ast}{\to}0$ in $E^\ast$.
As $E\in\text{\rm (DPSP)}$, $\|T^\ast f_n\|\to 0$.
It follows from $|f_n(Tx_n)|\leq\|T^\ast f_n\|\cdot \|x_n\|$ that $T\in \text{\rm aDP}(E,F)$.

The case $E^\ast \in\text{\rm (PSP)}$ is similar and is omitted.

\medskip
(iii) \ 
Let $T\in \text{\rm wDP}(E,F)$.
Take a disjoint $(x_n)$ in $B_E$ and w-null $(f_n)$ in $F^\ast$. 
As $x_n\stackrel{\rm w}{\to}0$, we have $f_n(Tx_n)\to 0$ by definition of $\text{\rm wDP}$-operators.
Thus, $T\in \text{\rm aMwc}(E,F)$ by~\cite[Prop.\,2]{EG23aff}.

To prove the second inclusions, first suppose  $0\leq T\in\text{\rm DP}(E,F)$. 
Let $f_n\stackrel{{\rm w}^\ast}{\to}0$ be disjoint in $B_{F^\ast}$. 
Then $|f_n|\stackrel{{\rm w}^\ast}{\to}0$. 
As $T$ is positive, $|T^\ast f_n|\leq T^\ast|f_n|$, and hence $|T^\ast f_n|\stackrel{{\rm w}^\ast}{\to}0$.
Let $(x_n)$ be a disjoint bounded sequence in $E_+$. 
As $E^\ast$ is a $\text{\rm KB}$-space, $|x_n|\stackrel{\rm w}{\to}0$, and therefore $\|Tx_n\|\to 0$.
By~\cite[Cor.2.6]{DoFr}, 
$T\in\text{\rm aLim}(E,F)$ as $\|T^\ast f_n\|\to 0$.

Now let $0\leq T\in\text{\rm aLim}(E,F)$. To see $T\in\text{\rm aDP}(E,F)$,
take a disjoint w-null sequence $(x_n)$ in $E$ (then $|x_n|\stackrel{\rm w}{\to}0$ by \cite[Thm.~4.34]{AlBu}).
We need to show $\|Tx_n\|\to 0$.
By~\cite[Cor.2.6, Cor.2.7]{DoFr}, 
it suffices to show that 
$f_n(T x_n)\to 0$ for every disjoint $(f_n)$ in $(B_{F^\ast})_+$.
Let $(f_n)$ be disjoint in $(B_{F^\ast})_+$.
Since $F=F^a$, $f_n\stackrel{{\rm w}^\ast}{\to}0$ by \cite[Cor.\,2.4.3]{Mey91}. 
Since  $|x_n|\stackrel{\rm w}{\to}0$ then  $T|x_n|\stackrel{\rm w}{\to}0$,
and hence $|Tx_n|\stackrel{\rm w}{\to}0$ because $|Tx_n|\leq T|x_n|$.
Since $T\in\text{\rm aLim}(E,F)$, then $\|T^\ast f_n\|\to 0$.
It follows from $|f_n(T x_n)|\leq \|T^\ast f_n\|\cdot \|x_n\|$ that $f_n(T x_n)\to 0$, as desired.
\end{proof}

\medskip

\begin{corollary}\label{cor to 4.2.2}
Let $E^\ast$ and $F$ have order continuous norm.
Then the following holds.
\begin{enumerate}[{\em (i)}]
\item 
$\text{\rm aLwc}_+(E,F) \subseteq  \text{\rm Mwc}(E,F)$.
\item
$\text{\rm DP}_+(E,F)\subseteq \text{\rm Mwc}(E,F)$ and 
$\text{\rm aLim}_+(E,F)\subseteq \text{\rm Mwc}(E,F)$. 
\item
If $F$ has \text{\rm w}-sequentially continuous lattice operations, then \\
$\text{\rm aLwc}(E,F)\subseteq \text{\rm Mwc}(E,F)$.
\item
If $E\in\text{\rm (SP)}$, then
$\text{\rm oLwc}(E,F)\subseteq \text{\rm semi-K}(E,F)$.
\end{enumerate}
\end{corollary}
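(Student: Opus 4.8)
The plan is to deduce all four inclusions from results already established, the common engine being the Dodds--Fremlin criterion \cite[Cor.\,2.6,\,Cor.\,2.7]{DoFr}: a weakly null sequence $(y_n)$ in $F$ is norm null as soon as $|y_n|\stackrel{\text{\rm w}}{\to}0$ and $g_n(y_n)\to0$ for every disjoint bounded sequence $(g_n)$ in $F^\ast_+$.

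\emph{Parts (i) and (iii).} Fix $T\in\text{\rm aLwc}(E,F)$, positive in case (i), and a disjoint bounded sequence $(x_n)$ in $E$; I must show $\|Tx_n\|\to0$. As $E^\ast$ has o-continuous norm, \cite[2.4.14]{Mey91} gives $x_n\stackrel{\text{\rm w}}{\to}0$, and the same fact applied to the (still disjoint bounded) sequence $(|x_n|)$ gives $|x_n|\stackrel{\text{\rm w}}{\to}0$; write $y_n:=Tx_n$, so $y_n\stackrel{\text{\rm w}}{\to}0$. The set $\{x_n\}\cup\{0\}$ is weakly compact, hence $\{y_n\}\cup\{0\}=T(\{x_n\}\cup\{0\})$ is an \text{\rm Lwc} set, so by the characterization of \text{\rm Lwc} sets through disjoint dual sequences, every disjoint bounded sequence in $F^\ast$ is uniformly null on it; in particular $g_n(y_n)\to0$ for every disjoint bounded $(g_n)$ in $F^\ast_+$. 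Finally, $|y_n|\stackrel{\text{\rm w}}{\to}0$: in case (iii) this follows from $y_n\stackrel{\text{\rm w}}{\to}0$ and the weak sequential continuity of the lattice operations of $F$, and in case (i) from $0\le|y_n|\le T|x_n|\stackrel{\text{\rm w}}{\to}0$ together with pairing against positive functionals. The Dodds--Fremlin criterion now gives $\|Tx_n\|\to0$, i.e. $T\in\text{\rm Mwc}(E,F)$.

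\emph{Part (ii).} First I record that a dual Banach lattice with o-continuous norm is a KB-space: any norm bounded increasing sequence in $E^\ast_+$ has a supremum in $E^\ast$, obtained by defining it pointwise on $E_+$ and extending linearly, and o-continuity of the norm then forces norm convergence. Hence $E^\ast$ is a KB-space, and $F=F^a$ since $F$ has o-continuous norm, so Theorem~\ref{1.2.3_new}\,(iii) applies and yields $\text{\rm DP}_+(E,F)\subseteq\text{\rm aLim}_+(E,F)\subseteq\text{\rm aDP}(E,F)$; since the operators in play are positive, the right-hand class lies in $\text{\rm aDP}_+(E,F)$, which by Proposition~\ref{newmncP1}\,(ii) is contained in $\text{\rm Mwc}(E,F)$. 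Concatenating, both $\text{\rm DP}_+(E,F)\subseteq\text{\rm Mwc}(E,F)$ and $\text{\rm aLim}_+(E,F)\subseteq\text{\rm Mwc}(E,F)$ follow.

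\emph{Part (iv) and the main obstacle.} Here the standing hypotheses already force $\dim E<\infty$: an infinite dimensional Banach lattice carries a normalized disjoint sequence $(x_n)$, which by \cite[2.4.14]{Mey91} is weakly null (as $E^\ast$ is o-continuous) and hence norm null by the Schur property, contradicting $\|x_n\|=1$. Thus every bounded $T:E\to F$ is compact, a fortiori semi-compact, and $\text{\rm oLwc}(E,F)\subseteq\text{\rm L}(E,F)\subseteq\text{\rm semi-K}(E,F)$; (should (iv) be intended only under $E\in\text{\rm (SP)}$ and $F=F^a$, one would instead invoke $\text{\rm semi-K}(E,F)=\text{\rm Lwc}(E,F)$ and show directly that $T\in\text{\rm oLwc}(E,F)$ maps $B_E$ to an \text{\rm Lwc} set). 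The genuine difficulty of the corollary is concentrated in part (i): without a legitimate ``$\text{\rm aLwc}_+=\text{\rm aLim}_+$ on $F^a$'' shortcut, the work lies in extracting $g_n(Tx_n)\to0$ from L-weak compactness of $\{Tx_n\}$ and in making sure $(Tx_n)$ and $(|Tx_n|)$ are weakly null before the Dodds--Fremlin criterion is applied, the latter being precisely where positivity of $T$ (resp. the lattice-operation hypothesis in (iii)) enters.
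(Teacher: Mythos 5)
Your argument is correct, and in parts (i) and (iv) it takes a genuinely different route from the paper. For (i) the paper does not argue directly: it concatenates $\text{\rm aLwc}_+(E,F)\subseteq\text{\rm aDP}(E,F)$ (\cite[Prop.~2.3]{BLM18}) with $\text{\rm aDP}_+(E,F)\subseteq\text{\rm Mwc}(E,F)$ (\cite[Prop.~2.1]{AqEl11}, available since $E^\ast$ is a KB-space), whereas you run the Dodds--Fremlin test on $(Tx_n)$ directly, extracting $g_n(Tx_n)\to 0$ from the fact that $\{Tx_n\}\cup\{0\}$ is an Lwc set; this treats (i) and (iii) uniformly and avoids the external references, but you should cite the dual characterization of Lwc sets that you use (disjoint bounded sequences of $F^\ast$ converge uniformly to zero on Lwc sets, cf.\ \cite{Mey91}, or equivalently the sequential characterization of aLwc operators in \cite[Prop.\,1]{EAS}, which is what the paper's one-line finish of (iii), ``by the definition of aLwc operators'', tacitly invokes). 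Part (ii) is essentially the paper's proof: the same chain $\text{\rm DP}_+\subseteq\text{\rm aLim}_+\subseteq\text{\rm aDP}_+\subseteq\text{\rm Mwc}$ via Theorem~\ref{1.2.3_new}\,(iii), with the last inclusion taken from Proposition~\ref{newmncP1}\,(ii) instead of \cite[Prop.~2.2]{AqEl11}; your preliminary remark that an order continuous dual norm makes $E^\ast$ a KB-space is standard and matches the way the paper uses the hypothesis. For (iv) the paper works directly with a positive oLwc operator, a disjoint w$^\ast$-null $(f_n)$ and a disjoint bounded $(x_n)$, using the Schur property to get $\|Tx_n\|\to 0$ and leaving the passage to semi-compactness implicit, whereas you observe that $E\in\text{\rm (SP)}$ together with order continuity of the norm of $E^\ast$ forces $\dim E<\infty$, because an infinite-dimensional Banach lattice contains a normalized disjoint sequence, which would be weakly null by \cite[2.4.14]{Mey91} and hence norm null. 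That observation is correct and does settle (iv) as stated (and is cleaner than the paper's argument); just replace ``a fortiori'' by the standard fact that every compact operator into a Banach lattice is semi-compact: cover $T(B_E)$ by finitely many $\varepsilon$-balls with centers $y_1,\dots,y_m$ and note $T(B_E)\subseteq[-u,u]+\varepsilon B_F$ with $u=\bigvee_{i}|y_i|$.
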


\begin{proof}
(i) \ 
By \cite[Prop.~2.3]{BLM18}, $\text{\rm aLwc}_+(E,F)\subseteq\text{\rm aDP}(E,F)$.
On the other hand, as $E^\ast$ is a $\text{\rm KB}$-space, 
$\text{\rm aDP}_+(E,F)\subseteq \text{\rm Mwc}(E,F)$ 
by \cite[Prop.~2.1]{AqEl11}.

\medskip
(ii) \ 
By \cite[Prop.~2.2]{AqEl11}, order bounded $\text{\rm aDP}(E,F)$ are $\text{\rm Mwc}$.
If, in addition, $F=F^a$, then
$\text{\rm DP}_+(E,F)\subseteq \text{\rm aLim}_+(E,F)\subseteq \text{\rm aDP}_+(E,F)$
by Proposition~\ref{1.2.3_new}\,(iv).

\medskip
(iii) \ 
Let $T\in\text{\rm aLwc}(E,F)$ and let $(x_n)$ be a disjoint sequence in $B_E$. We need to show $\|Tx_n\|\to 0$. 
As $E^\ast$ is a $\text{\rm KB}$-space, $x_n\stackrel{\rm w}{\to}0$ and $Tx_n\stackrel{\rm w}{\to}0$.
Since $F$ has weak sequentially continuous lattice operations,
$|Tx_n|\stackrel{\rm w}{\to}0$.
By \cite[Cor.\,2.6]{DoFr}, it remains to show $f_n(Tx_n)\to 0$
for each disjoint bounded $(f_n)$ in $F^\ast_+$. 
This holds by the definition of aLwc operators.

\medskip
(iv) \ 
Let $T\in\text{\rm oLwc}_+(E,F)$. 
Let $(f_n)$ be a disjoint w$^\ast$-null sequence in $B_{F^\ast}$.
Then $|T^\ast f_n|\stackrel{w^\ast}{\to}0$. If $(x_n)$ is a disjoint bounded
sequence in $E_+$, $x_n\stackrel{w^\ast}{\to}0$ as $E^\ast$ is a $\text{\rm KB}$-space
and $f_n(Tx_n)=T^\ast f_n(x_n)\to 0$, and $|f_n(Tx_n)|\leq \|f_n\|\cdot \|Tx_n\|\to 0$.
\end{proof}

\subsection{}

We finish with weak compactness of limitedly Lwc operators. 
Let $T\in l\text{\rm -Lwc}_+(E,F)$.
Since $T(E)\subseteq F^a$, we may assume $T$ takes values
in a Banach lattice with order continuous norm (see proof of Proposition~\ref{assert_smsth}\,(i)). This being the case, if $T\in\text{\rm aLim}(E,F)$
then $T\in\text{\rm Lwc}_+(E,F)$ and is weakly compact.

\begin{proposition}
Let $E^\ast$ be a $\text{\rm KB}$-space, $E\in \text{\rm (w$^\ast$DPP)}$,
and $F\in\text{\rm (d)}$.
Then $l\text{\rm -Lwc}_+(E,F) \subseteq \text{\rm W}(E,F)$.
\end{proposition}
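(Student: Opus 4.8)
The plan is to show that a positive limitedly L-weakly compact operator $T:E\to F$ under the stated hypotheses maps the unit ball $B_E$ to a relatively weakly compact set. First I would reduce to the order-continuous setting: since $T\in l\text{-Lwc}_+(E,F)$, we have $T(E)\subseteq F^a$ (this is essentially the argument used in Proposition~\ref{assert_smsth}\,(i) combined with the fact that limited sets are bounded and disjoint images become norm-null), so WLOG $F=F^a$, i.e. $F$ has order continuous norm. Next, by Proposition~\ref{assert_smsth}\,(i) we have $l\text{-Lwc}_+(E,F)=\text{\rm oLwc}_+(E,F)$, so $T$ is also order L-weakly compact. The goal is then to upgrade this to weak compactness of $T(B_E)$.

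The core of the argument uses the characterization of weak compactness via disjoint sequences. Since $E^\ast$ is a KB-space, the norm on $E^\ast$ is order continuous, so by \cite[2.4.14]{Mey91} every disjoint bounded sequence $(x_n)$ in $E$ satisfies $x_n\stackrel{\text{\rm w}}{\to}0$. To prove $T\in\text{\rm W}(E,F)$ it suffices (again by a disjointness/order-continuity criterion, e.g. the dual of \cite[Thm.\,4.34]{AlBu} applied in $F$, or directly via the fact that in an order continuous Banach lattice a bounded set is relatively weakly compact iff its almost-disjoint sequences are norm null together with L-weak compactness of order intervals) to control $\|Tx_n\|$ for disjoint bounded $(x_n)$ in $E$ and, dually, to handle w-null sequences in $F^\ast$. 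The key step will be to feed a disjoint w$^\ast$-null sequence $(f_n)$ in $F^\ast$ into the limited/oLwc structure: since $F\in\text{\rm (d)}$, from $f_n\stackrel{\text{\rm w}^\ast}{\to}0$ disjoint we get $|f_n|\stackrel{\text{\rm w}^\ast}{\to}0$, and since $T\in\text{\rm oLwc}(E,F)$, \cite[Thm.\,2.1]{BLM21} gives $|T^\ast f_n|\stackrel{\text{\rm w}^\ast}{\to}0$, hence $T^\ast f_n\stackrel{\text{\rm w}^\ast}{\to}0$ in $E^\ast$. Now the hypothesis $E\in\text{\rm (w$^\ast$DPP)}$ enters: for a disjoint bounded $(x_n)$ in $E$ we have $x_n\stackrel{\text{\rm w}}{\to}0$, and pairing with the w$^\ast$-null $(T^\ast f_n)$ in $E^\ast$ gives $f_n(Tx_n)=(T^\ast f_n)(x_n)\to 0$.

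Having $f_n(Tx_n)\to 0$ for every disjoint w$^\ast$-null $(f_n)\subseteq B_{F^\ast}$ and every disjoint bounded $(x_n)\subseteq B_E$, I would then invoke a Grothendieck-type/disjoint-sequence criterion for relative weak compactness in $F$. Concretely: since $F=F^a$, a bounded set $A\subseteq F$ fails to be relatively weakly compact iff there is a disjoint sequence $(y_n)$ in the solid hull of $A$ with $\|y_n\|\not\to 0$, equivalently (by an Hahn–Banach/gliding-hump argument) iff there exist a sequence $(y_n)$ in $A$ and a disjoint bounded sequence $(f_n)$ in $F^\ast$ with $\liminf|f_n(y_n)|>0$; one arranges $(f_n)$ to be w$^\ast$-null using order continuity of the norm on $F$ together with property (d). Applying this with $A=T(B_E)$ and writing $y_n=Tx_n$ for suitable $x_n\in B_E$ that may be taken almost disjoint, the estimate $f_n(Tx_n)\to 0$ derived above yields a contradiction, so $T(B_E)$ is relatively weakly compact and $T\in\text{\rm W}(E,F)$.

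The main obstacle I anticipate is the last reduction: turning "$T(B_E)$ not relatively weakly compact" into a genuinely \emph{disjoint} sequence $(x_n)$ in $B_E$ (not merely an almost-disjoint or approximately disjoint one) paired against a \emph{disjoint w$^\ast$-null} sequence $(f_n)$ in $B_{F^\ast}$, so that both the hypothesis $E\in\text{\rm (w$^\ast$DPP)}$ (which needs $x_n\stackrel{\text{\rm w}}{\to}0$, supplied by $E^\ast$ KB) and the oLwc/property-(d) machinery (which needs disjointness of $(f_n)$) apply simultaneously. This is a standard but delicate double gliding-hump / exhaustion argument in $F$ and $F^\ast$; the order continuity of the norm in both $F$ and $E^\ast$ and property (d) of $F$ are exactly what make it go through, and I would cite \cite[Thm.\,4.34]{AlBu} and \cite[2.4.14]{Mey91} (and its dual) rather than reprove the disjointification.
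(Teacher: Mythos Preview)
Your computations through the estimate $f_n(Tx_n)\to 0$ are correct and in fact match the paper's exactly: you obtain $|T^\ast f_n|\stackrel{\text{w}^\ast}{\to}0$ in $E^\ast$ for disjoint $(f_n)$ in $B_{F^\ast}$, and $(T^\ast f_n)(x_n)\to 0$ for every disjoint bounded $(x_n)$ in $E_+$ (using $E^\ast$ KB and the w$^\ast$DPP). The problem is the final assembly. The ``Grothendieck-type criterion'' you invoke --- a bounded $A\subseteq F$ with $F=F^a$ fails to be relatively weakly compact iff some disjoint sequence in $\text{sol}(A)$ is not norm null --- is the characterization of \emph{Lwc} sets, not of relatively weakly compact sets; the two notions are distinct even when $F=F^a$ (take $A=B_{\ell^2}$). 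And the further reduction to $y_n=Tx_n$ with $(x_n)$ \emph{disjoint in $E$} is not available: a disjoint sequence in $\text{sol}(T(B_E))\subseteq F$ gives you no control over preimages in $E$. So your step~5 is a genuine gap, and the ``double gliding hump'' you sketch cannot close it.

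The paper avoids this entirely by working on the dual side. The two facts you already have, $|T^\ast f_n|\stackrel{\text{w}^\ast}{\to}0$ and $g_n(x_n)\to 0$ for $g_n:=T^\ast f_n$ and every disjoint $(x_n)$ in $(B_E)_+$, are precisely the hypotheses of the Dodds--Fremlin criterion \cite[Cor.\,2.6]{DoFr}, which gives $\|T^\ast f_n\|\to 0$. Combined with property~(d) (via \cite[Thm.\,3]{El}, which reduces the almost-limited check to positive disjoint w$^\ast$-null $(f_n)$), this says exactly that $T\in\text{aLim}(E,F)$. Since you already reduced to $F=F^a$, the almost limited set $T(B_E)$ is then Lwc (\cite[Thm.\,2.6]{CCJ}), hence relatively weakly compact. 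In short: feed your estimates into Dodds--Fremlin on $E^\ast$ to get $T$ almost limited, then use ``almost limited $\Rightarrow$ Lwc'' in the order-continuous range; do not try to test weak compactness of $T(B_E)$ directly.
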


\begin{proof}
Let $0\leq T\in l\text{\rm -Lwc}(E,F)$.
First, we show $T\in\text{\rm aLim}(E,F)$. 
By \cite[Thm.\,3]{El}, it suffices to show $\|T^\ast f_n\| \to 0$
for each positive disjoint w$^\ast$-null sequence in $B_{F^\ast}$. 
Let $(f_n)$ be such a sequence.
Then $|T^\ast f_n| = T^\ast f_n\stackrel{{\rm w}^\ast}{\to} 0$. 
By \cite[Cor.\,2.6]{DoFr}, it remains to show $f_n(Tx_n)\to 0$
for each disjoint bounded sequence $(x_n)$ in $E_+$. 
Let $(x_n)$ be such a sequence.
Since $E^\ast$ is a $\text{\rm KB}$-space, $x_n\stackrel{\rm w}{\to}0$. Then $T^\ast f_n(x_n)=f_n(Tx_n)\to 0$
as a Banach lattice with  w$^\ast$DPP are exactly those
for which $f_n(x_n)\to 0$ for each w$^\ast$-null $(f_n)$
in $E^\ast$ and w-null $(x_n)$ in $E$.
Thus  $T\in\text{\rm aLim}(E,F)$. 
Since $T\in l\text{\rm -Lwc}(E,F)$, $T(E)\subseteq F^a$ by \cite[Thm.\,2]{AEG_duality}.
Then $T\in\text{\rm Lwc}(E,F)$ (see \cite[Thm.\,2.6]{CCJ}), and hence $T$ is weakly compact.
\end{proof}


{\small
\bibliographystyle{plain}
\bibliography{BIBLIMITED_22.05.2024}

\end{document}